\newcommand{\nc}{\newcommand}
\newcommand{\delete}[1]{}
\nc{\mlabel}[1]{\label{#1}}  
\nc{\mcite}[1]{\cite{#1}}  
\nc{\mref}[1]{\ref{#1}}  
\nc{\meqref}[1]{\eqref{#1}}  
\nc{\mbibitem}[1]{\bibitem{#1}} 
\nc{\mlabel}[1]{\label{#1}  
{\hfill \hspace{1cm}{\small\tt{{\ }\hfill(#1)}}}}
\nc{\mcite}[1]{\cite{#1}{\small{\tt{{\ }(#1)}}}}  
\nc{\mref}[1]{\ref{#1}{{\tt{{\ }(#1)}}}}  
\nc{\meqref}[1]{\eqref{#1}{{\tt{{\ }(#1)}}}}  
\nc{\mbibitem}[1]{\bibitem[\bf #1]{#1}} 
\newtheorem{theorem}{Theorem}[section]
\newtheorem{prop}[theorem]{Proposition}
\newtheorem{lemma}[theorem]{Lemma}
\newtheorem{coro}[theorem]{Corollary}
\theoremstyle{definition}
\newtheorem{defn}[theorem]{Definition}
\newtheorem{remark}[theorem]{Remark}
\newtheorem{exam}[theorem]{Example}
\newtheorem{prop-def}{Proposition-Definition}[section]
\newcommand\alphlist{a,b,c,d,e,f,g,h,i,j,k,l,m,n,o,p,q,r,s,t,u,v,w,x,y,z}
\newcommand\Alphlist{A,B,C,D,E,F,G,H,I,J,K,L,M,N,O,P,Q,R,S,T,U,V,W,X,Y,Z}
\newcommand\getcmds[3]{\expandafter\newcommand\csname #2#1\endcsname{#3{#1}}}
\alphlist\do{\expandafter\getcmds\expandafter{\x}{frak}{\mathfrak}}
\Alphlist\do{\expandafter\getcmds\expandafter{\x}{frak}{\mathfrak}}
\nc{\name}[1]{{\bf #1}}
\nc{\bfk}{{\bf k}}
\font\cyr=wncyr10
\newfont{\scyr}{wncyr10 scaled 550}
\nc{\sha}{\mbox{\cyr X}}
\nc{\ssha}{\mbox{\bf \scyr X}}
\nc{\Id}{\mathrm{Id}}
\nc{\lbar}[1]{\overline{#1}}
\nc{\leaf}{\mathrm{leaf}}
\nc{\fax}{\mathcal{F}(X)} 
\nc{\pfa}{\mathcal{PF}}
\nc{\shpr}{\diamond}    
\nc{\ot}{\otimes}      
\nc{\dep}{\mathrm{dep}} 
\nc{\var}{\varepsilon} 
\nc{\id}{\mathrm{id}}  
\nc{\set}{\mathbf{Set}} 
\nc{\vect}{\mathbf{Vect}} 
\nc{\spep}{\mathbf{p}} 
\nc{\speq}{\mathbf{q}} 
\nc{\spei}{\mathbf{1}_{\bfk}} 
\nc{\Hom}{\mathrm{Hom}}
\nc{\calc}{\mathcal{C}}
\nc{\adf}{\mathrm{ADF}}
\nc{\pure}{simple\xspace}
\nc{\Coinv}{\mathrm{Coinv}}
\nc{\ff}{\mathcal{F}}
\nc{\bff}{\widehat{\mathcal{F}}}
\nc{\com}{\mathrm{Com}}
\nc{\comp}{\mathrm{Comp}}
\nc{\rbs}{\text{Rota-Baxter species}}
\renewcommand{\geq}{\geqslant}
\renewcommand{\leq}{\leqslant}
\nc{\li}[1]{\textcolor{red}{#1}}
\nc{\lir}[1]{\textcolor{red}{Li:#1}}
\nc{\peng}[1]{\textcolor{purple}{Peng:#1}}
\nc{\yi}[1]{\textcolor{cyan}{Yi:#1}}
\nc{\revise}[1]{\textcolor{red}{#1}}
\nc{\hrtb}{\mathcal{H}_{RT}(X\sqcup\Omega)} \nc{\hrts}{\mathcal{H}_{\mathrm{RT}}(X, \Omega)}\nc{\rts}{\mathcal{T}(X, \Omega)}\nc{\rfs}{\mathcal{F}(X, \Omega)} \nc{\counit}{\varepsilon_{\mathrm{RT}}}
\nc{\Po}{(P_\omega)_{\omega\in \Omega}}
\nc{\Pop}{(P'_\omega)_{\omega\in \Omega}}
\nc{\Bo}{(B_{\omega}^+)_{\omega\in \Omega}}
\nc{\col}{\Delta}
\nc{\coll}{\Delta_{\lambda}}
\nc{\calt}{{\mathcal T}}
\newcommand{\sh}{\mathrm{Sh}}
\newcommand{\qsh}{\mathrm{QSh}}
\nc{\bre}{{\rm bre}}
\nc{\etree}{1}
\nc{\calf}{{\mathcal F}}
 \nc{\conc}{m_{RT}}
 \nc{\RT}{\mathrm{RT}}
 \nc{\mul}{m_{\mathrm{RT}}}
\nc{\free}[1]{\bar{#1}}
\nc{\hck}{H_{RT}}
\newcommand{\tdun}[1]
{\begin{picture}(10,5)(-2,-1)
\put(0,0){\circle*{2}}
\put(3,-2){\tiny #1}
\end{picture}}
\newcommand{\tddeux}[2]{\begin{picture}(12,5)(0,-1)
\put(3,0){\circle*{2}}
\put(3,0){\line(0,1){5}}
\put(3,5){\circle*{2}}
\put(6,-3){\tiny #1}
\put(6,3){\tiny #2}
\end{picture}}
\newcommand{\tdtroisun}[3]{\begin{picture}(20,12)(-5,-1)
\put(3,0){\circle*{2}}
\put(-0.65,0){$\vee$}
\put(6,7){\circle*{2}}
\put(0,7){\circle*{2}}
\put(5,-2){\tiny #1}
\put(8,5){\tiny #2}
\put(-6,5){\tiny #3}
\end{picture}}
\newcommand{\tdtroisdeux}[3]{\begin{picture}(12,12)(-2,-1)
\put(0,0){\circle*{2}}
\put(0,0){\line(0,1){5}}
\put(0,5){\circle*{2}}
\put(0,5){\line(0,1){5}}
\put(0,10){\circle*{2}}
\put(3,-2){\tiny #1}
\put(3,3){\tiny #2}
\put(3,9){\tiny #3}
\end{picture}}
\newcommand{\tdquatreun}[4]{\begin{picture}(20,12)(-5,-1)
\put(3,0){\circle*{2}}
\put(-0.6,0){$\vee$}
\put(6,7){\circle*{2}}
\put(0,7){\circle*{2}}
\put(3,7){\circle*{2}}
\put(3,0){\line(0,1){7}}
\put(5,-2){\tiny #1}
\put(8.5,5){\tiny #2}
\put(1,10){\tiny #3}
\put(-5,5){\tiny #4}
\end{picture}}
\newcommand{\tdquatredeux}[4]{\begin{picture}(20,20)(-5,-1)
\put(3,0){\circle*{2}}
\put(-.65,0){$\vee$}
\put(6,7){\circle*{2}}
\put(0,7){\circle*{2}}
\put(0,14){\circle*{2}}
\put(0,7){\line(0,1){7}}
\put(5,-2){\tiny #1}
\put(9,5){\tiny #2}
\put(-6,5){\tiny #3}
\put(-6,12){\tiny #4}
\end{picture}}
\newcommand{\tdquatretrois}[4]{\begin{picture}(20,20)(-5,-1)
\put(3,0){\circle*{2}}
\put(-.65,0){$\vee$}
\put(6,7){\circle*{2}}
\put(0,7){\circle*{2}}
\put(6,14){\circle*{2}}
\put(6,7){\line(0,1){7}}
\put(5,-2){\tiny #1}
\put(8,5){\tiny #2}
\put(-6,5){\tiny #4}
\put(8,12){\tiny #3}
\end{picture}}
\newcommand{\tdquatrequatre}[4]{\begin{picture}(20,14)(-5,-1)
\put(3,5){\circle*{2}}
\put(-.65,5){$\vee$}
\put(6,12){\circle*{2}}
\put(0,12){\circle*{2}}
\put(3,0){\circle*{2}}
\put(3,0){\line(0,1){5}}
\put(6,-3){\tiny #1}
\put(6,4){\tiny #2}
\put(9,12){\tiny #3}
\put(-5,12){\tiny #4}
\end{picture}}
\newcommand{\tdquatrecinq}[4]{\begin{picture}(12,19)(-2,-1)
\put(0,0){\circle*{2}}
\put(0,0){\line(0,1){5}}
\put(0,5){\circle*{2}}
\put(0,5){\line(0,1){5}}
\put(0,10){\circle*{2}}
\put(0,10){\line(0,1){5}}
\put(0,15){\circle*{2}}
\put(3,-2){\tiny #1}
\put(3,3){\tiny #2}
\put(3,9){\tiny #3}
\put(3,14){\tiny #4}
\end{picture}}
\begin{document}

\title[Moerdijk Hopf algebras of decorated rooted forests]{Moerdijk Hopf algebras of decorated rooted forests: an operated algebra approach}
%

\author{Lo\"\i c Foissy}
\address{Univ. Littoral C\^ote d'Opale, UR 2597 LMPA, Laboratoire de Math\'ematiques Pures et Appliqu\'ees Joseph Liouville F-62100 Calais, France}
\email{loic.foissy@univ-littoral.fr}

\author{Xiao-song Peng}
\address{School of Mathematics and Statistics,
Jiangsu Normal University, Xuzhou, Jiangsu 221116, P.\,R. China}
\email{pengxiaosong3@163.com}

\author{Yunzhou Xie}
\address{Department of Mathematics, Imperial College London, London SW7 2AZ, UK}
\email{yx3021@ic.ac.uk}

\author{Yi Zhang}
\address{School of Mathematics and Statistics,
	Nanjing University of Information Science \& Technology, Nanjing, Jiangsu 210044, P.\,R. China}
\email{zhangy2016@nuist.edu.cn}

\date{\today}
\begin{abstract}
In this paper, we first endow the space of decorated planar rooted forests with a coproduct that equips it with the structure of a bialgebra and further a Moerdijk Hopf algebra. We also present a combinatorial description of this coproduct, and further give an explicit formulation of its dual coproducts through the newly defined notion of forest-representable matrices.
By viewing the Moerdijk Hopf algebra within the framework of operated algebras, we introduce the notion of a multiple cocycle Hopf algebra, incorporating a symmetric Hochschild 1-cocycle condition. We then show that the antipode of this Hopf algebra is a  Rota-Baxter operator on Moerdijk Hopf algebras.
Furthermore, we investigate the universal properties of cocycle Hopf algebras. As an application, we construct the initial object in the category of free cocycle Hopf algebras on undecorated planar rooted forests, which coincides with the well-known Moerdijk Hopf algebra.
\end{abstract}

\subjclass[2010]{
16W99, 
05C05, 
16S10, 
16T10, 
16T30,  
17B60, 
}

\keywords{Rooted forest; Hopf algebra; Cocycle condition}

\maketitle

\tableofcontents

\setcounter{section}{0}

\allowdisplaybreaks

\section{Introduction}

Rooted tree Hopf algebras, originally introduced by Connes and Kreimer in the study of renormalization, capture the recursive structure of Feynman diagrams in an algebraic framework. In this paper, we construct a Moerdijk Hopf algebra on the space of decorated planar rooted forests and characterize its coproduct combinatorially using admissible cuts. Within the setting of operated algebras, we introduce multiple cocycle Hopf algebras and prove that the constructed Moerdijk Hopf algebra is the free object in this category.

\subsection{Rooted tree bialgebras and Hopf algebras}

A rooted tree Hopf algebra is an algebraic structure that arises in the combinatorial study of rooted trees, playing a crucial role in quantum field theory~\mcite{BF10, CK98,CGPZ20, Kre98}, noncommutative geometry~\mcite{GPZ11}, and numerical analysis~\mcite{Bro04}.
Such an algebra employs rooted trees as basis elements, with the product, coproduct, and antipode operations encoding their combinatorial and recursive properties.

The study of Hopf algebras in combinatorics and mathematical physics has gained significant attention due to their rich algebraic structures and broad applications.
The foundational work of Connes and Kreimer ~\mcite{CK98} established the connection between renormalization and combinatorial Hopf algebras, using rooted trees as a key combinatorial structure to model the recursive nature of Feynman diagram subtractions. Since then, rooted tree Hopf algebras have been extensively studied in various contexts, including operads~\mcite{CL01}, differential algebras~\mcite{GL05}, Rota-Baxter algebras~\mcite{ZGG16, ZXG}, pre-Lie and Lie algebra~\mcite{Mur06} and control theory.

Recent advancements in this field focus on the structural properties and applications of rooted tree Hopf algebras. Researchers have explored their bialgebraic structure, graded connections, and cohomological interpretations. Building upon this foundation, researchers have progressively expanded the algebraic representation framework through novel Hopf algebra constructions, including the Foissy-Holtkamp \mcite{Foi02, Hol03}, Grossman-Larson~\mcite{GL89},  Loday-Ronco ~\mcite{LR98}, Moerdijk~\mcite{Moe01}.

Applications have expanded into numerical methods, particularly Butcher's B-series in numerical integration~\mcite{LM11}, where the Hopf algebraic structure encodes the composition of numerical schemes. Furthermore, these algebras have found applications in stochastic processes~\mcite{BHZ19}, perturbative expansions~\mcite{CH24}, and even the Minimalist Program in generative linguistics~\mcite{MBC23}
where Hopf algebraic structures play a key role. Current research trends involve deeper categorical approaches~\mcite{Koc13}, interactions with pre-Lie algebras~\mcite{CL01}, and explicit computations of cohomology groups in tree-related Hopf algebras~\mcite{Foi18}.

A landmark advancement occurred in 2019 when Bruned, Hairer, and Zambotti~\mcite{BHZ19} established a Hopf algebra on decorated rooted forests with types (which are decorations on the edges) to algebraically characterize renormalization procedures in stochastic partial differential equations. This breakthrough propelled rooted forest combinatorics to the forefront of mathematical physics research. Subsequent developments by Foissy~\mcite{Foi18} generalized this framework by constructing extended Connes-Kreimer-type algebras.
Recent studies in~\mcite{FGPXZ, PZGL, ZXG, ZCGL19, ZL25} systematically investigated infinitesimal bialgebras, twisted bialgebras, (left counit) Hopf algebras on decorated rooted forests(trees), demonstrating sustained theoretical innovation in this domain.

In this paper, we construct a coproduct on decorated planar rooted forests by employing a symmetric 1-cocycle condition, thereby highlighting its combinatorial nature. A detailed combinatorial characterization of this coproduct is established. We also explicitly describe its dual coproducts through the newly introduced notion of forest-representable matrices. We  represent every decorated forest $F$ in the space of all decorated forests $\rfs$ by a matrix $M(F)$ of size $n\times(n+1)$.
Then we obtain the following one of our main results about the dual product $\star$.
\begin{theorem}[Theorem~\mref{thm:dul}]
Let $F,G\in \rfs$, with respectively $k$ and $l$ vertices.
\begin{align*}
F\star G&=\sum_{\sigma \in \sh(k,l)} \sum_{C\in \mathcal{FM}_\sigma(M(F),M(G))}M^{-1}(C).
\end{align*}
\end{theorem}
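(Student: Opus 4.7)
The plan is to characterize $F\star G$ through its defining duality with the coproduct $\Delta$, and then enumerate, via the matrix encoding $M$, the forests $H$ whose coproducts contribute the tensor $F\otimes G$. By the definition of the dual product, one has $F\star G=\sum_H c_H\,H$, where $c_H$ is the coefficient of $F\otimes G$ in $\Delta(H)$. Thus the claim reduces to identifying, for each pair $(F,G)$, the forests $H$ and the multiplicities with which $F\otimes G$ occurs in $\Delta(H)$, and matching that count with the double sum indexed by shuffles $\sigma\in\sh(k,l)$ and matrices $C\in\mathcal{FM}_\sigma(M(F),M(G))$.

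First I would invoke the combinatorial description of $\Delta$ established earlier in the paper: $\Delta(H)$ decomposes as a sum over admissible cuts of $H$, each cut producing an ordered pair of subforests (the pruned piece and the remaining piece) that appears as a tensor in the coproduct. Fixing $F$ and $G$, this means $c_H$ equals the number of admissible cuts of $H$ whose pruned piece equals $F$ and whose remaining piece equals $G$ in the planar decorated sense. Since $H$ must have exactly $k+l$ vertices whenever $c_H\neq 0$, every such cut induces a shuffle $\sigma\in\sh(k,l)$ recording the positional order in $H$ of the $k$ vertices coming from $F$ relative to the $l$ vertices coming from $G$.

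Next I would translate this into the matrix language. By the bijectivity of $M$ between forests on $n$ vertices and forest-representable $n\times(n+1)$ matrices, reconstructing $H$ from the shuffle $\sigma$ and the matrices $M(F),M(G)$ amounts to prescribing the additional entries that encode how the $F$-part and $G$-part are glued to form $M(H)$; the admissibility of the corresponding cut is equivalent to demanding that the resulting matrix remain forest-representable. This is exactly the content of the set $\mathcal{FM}_\sigma(M(F),M(G))$, which parametrises the admissible gluings of $M(F)$ and $M(G)$ along the order imposed by $\sigma$. Summing $M^{-1}(C)$ over $C$ in this set, and then over $\sigma$, should recover every valid $H$ with its correct multiplicity, yielding the stated formula.

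The principal obstacle is establishing the bijection between admissible cuts of $H$ producing $(F,G)$ and the elements of $\bigsqcup_\sigma \mathcal{FM}_\sigma(M(F),M(G))$. Starting from a cut, one must read off the induced shuffle, extract $M(F)$ and $M(G)$ as submatrices of $M(H)$, and identify the remaining entries as the gluing data characterising $\mathcal{FM}_\sigma$; conversely, given any $C\in\mathcal{FM}_\sigma(M(F),M(G))$, one must verify that $M^{-1}(C)$ admits a unique admissible cut recovering $(F,G)$. I expect the planar ordering and the edge decorations to be the delicate points, and matching the defining combinatorics of $\mathcal{FM}_\sigma$ with the admissibility condition on cuts to be the main technical hurdle.
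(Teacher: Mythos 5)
Your overall skeleton (dualise, identify the coefficient of $F\otimes G$ in $\Delta(H)$, and biject the contributing data with $\bigsqcup_{\sigma}\mathcal{FM}_\sigma(M(F),M(G))$) is the same as the paper's, but the combinatorial input you feed into it is wrong, and this is not a cosmetic slip. The coproduct being dualised is the one of Theorem~\ref{thm:comcoproduct}: for $\lambda=0$ one has $\Delta(H)=\sum_{I\subseteq V(H)}H_I\otimes H_{V(H)\setminus I}$, a sum over \emph{all} vertex subsets, each tensor factor being the induced subforest obtained by the smallest-ancestor rule. It is not the Connes--Kreimer admissible-cut coproduct with a ``pruned piece'' and a ``remaining piece''. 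For example, for $H=B^+_\alpha(\bullet_x)$ the coproduct here contains the term $\bullet_\alpha\otimes\bullet_x$ (take $I$ to be the root alone), which no admissible cut produces. Consequently the coefficient $c_H$ you need is $|N(F,G;H)|=|\{I\subseteq V(H)\mid H_I=F,\ H_{V(H)\setminus I}=G\}|$, not a count of admissible cuts; with the admissible-cut count the identity with $\sum_\sigma|\mathcal{FM}_\sigma(M(F),M(G))|$ would simply fail, since the sets $\mathcal{FM}_\sigma$ place no constraint at all on the ``mixed'' entries $C_{\sigma(i),\sigma(j)}$ with $i\leq k<j$ beyond forest-representability of $C$, whereas admissibility would force the left factor to sit above a cut and would exclude many of the terms the theorem requires.

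Once the correct count $|N(F,G;H)|$ is in place, the remaining step is exactly the bijection you gesture at: order $V(H)$ by the total order $\leq_{h,r}$, read off from a subset $I\in N(F,G;H)$ the shuffle $\sigma$ recording the positions of $I$ versus its complement, and check that $M(H)\in\mathcal{FM}_\sigma(M(F),M(G))$ --- the submatrix conditions encode precisely that the restrictions of $\leq_h$ and $\leq_r$ to $I$ and to $V(H)\setminus I$, together with the decorations, agree with those of $F$ and $G$ --- and conversely that membership in $\mathcal{FM}_\sigma(M(F),M(G))$ forces $H_{\{\sigma(1),\ldots,\sigma(k)\}}=F$ and $H_{\{\sigma(k+1),\ldots,\sigma(k+l)\}}=G$. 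You correctly identify this as the technical core but leave it unexecuted; in the paper it is short precisely because $\mathcal{FM}_\sigma$ was defined to make it tautological. In summary: right architecture, wrong coproduct model, and the central bijection only announced.
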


These advancements highlight the construction of  bialgebras or even Hopf algebras on variously decorated rooted forests as a promising research direction. Particularly noteworthy is the combinatorial interpretation that rooted tree or forest structures provide for the coproduct operations in these bialgebras. Such combinatorial-algebraic correspondences not only drive theoretical developments but also serve as critical interfaces for practical applications.

\subsection{Operated Hopf algebras}
Inspired by theory of multi-operator groups~\mcite{Hig56}, Kurosh~\mcite{Kur60} pioneered the concept of algebras with linear operators. However, this theoretical framework initially received limited attention until its significance was rediscovered by Guo~\mcite{Guo09}, see also~\mcite{BCQ10}.

Utilizing combinatorial objects such as Motzkin paths, rooted forests, and bracketed words, Guo~\mcite{Guo09}  constructed free objects for these algebras. These algebraic structures are now collectively referred to as $\Omega$-operated algebras (or multi-operated algebras), where $\Omega$ denotes the index set of linear operators.
Of particular interest is the fact that the Connes-Kreimer Hopf algebra on rooted forests, when equipped with the grafting operator, naturally constitutes a special case of an operated algebra.

In 2016, Zhang-Gao-Guo~\mcite{ZGG16} innovatively introduced the concept of operated Hopf algebras by integrating the theories of Hopf algebras and operated algebras on decorated rooted forests, systematically investigating free objects in this category. Building on the Grobner-Shirshov method, the results were further generalized in~\mcite{ZGG22}, leading to the establishment of the  $\Omega$-operated Hopf algebra.

We would like to emphasize that the Loday-Ronco Hopf algebra can also be studied within the unified framework of operated algebras. In ~\mcite{ZG20}, Zhang and Gao introduced the
concepts of $\vee_\Omega$-Hopf algebra and proved that the Loday-Ronco Hopf algebra can be characterized as the free multiple 1-cocycle $\vee_\Omega$-Hopf algebra generated by the empty set. Later, Marcolli, Berwick, and Chomsky~\mcite{MBC23, MBC25} investigated applications of Hopf algebraic structures in Chomsky's Minimalist Program, a framework introduced in the 1990s to capture the most economical principles of language. They formalized syntactic merge as algebraic operations within Hopf algebras, examined the interplay of external and internal merge in the Loday-Ronco Hopf algebra, and analyzed its computational properties. Drawing inspiration from renormalization methods in physics, they further used operated Hopf algebras to model the computation of semantic meaning from syntactic expressions. This algebraic perspective not only strengthens the mathematical foundations of generative linguistics but also suggests new methods for natural language processing and computational linguistics.

%
%
%

These developments are not only of fundamental importance in algebraic combinatorics and computational mathematics but also exhibit broad applications in quantum field theory, renormalization theory, generative linguistics, and noncommutative geometry.

Considering that the Moerdijk Hopf algebra is another important class of Hopf algebras on rooted trees, its comultiplication structure is also related to the 1-cocycle condition. We will investigate the Moerdijk Hopf algebra from the perspective of operated algebras, with the expectation that it may also provide a new algebraic interpretation of the Minimalist Program, enabling a mathematically precise formulation of language generation rules.

In this paper, we study the Moerdijk Hopf algebra within the framework of operated algebras, we introduce the notion of a multiple cocycle Hopf algebra, incorporating a symmetric Hochschild 1-cocycle condition. Furthermore, we investigate the universal properties of cocycle Hopf algebras. As an application, we use the universal properties to get a cocycle bialgebra morphism.
\begin{coro}[Corollary~\mref{coro:uni}]
Let $\lambda,\mu \in \bfk$.
The following map is a bialgebra morphism:
\begin{align*}
\phi_\lambda:\left\{\begin{array}{rcl}
(\hrts, m_{\mathrm{RF}}, \Delta_\mu)&\longrightarrow&(\hrts, m_{\mathrm{RF}}, \Delta_{\lambda\mu})\\
F&\longmapsto&\lambda^{d_X(F)}F,
\end{array}\right.
\end{align*}
where $d_X(F)$ is the number of leaves of $F$ decorated by an element of $X$.
\end{coro}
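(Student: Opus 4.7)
The plan is to deduce the statement as an application of the universal property of $\hrts$ as a free cocycle bialgebra established earlier in the paper. The idea is to endow the target $(\hrts, m_{\mathrm{RF}}, \Delta_{\lambda\mu})$ with the structure of a cocycle bialgebra \emph{with parameter $\mu$} (by an appropriate rescaling of the grafting operators $B^{+}_\omega$), together with a choice of image for the generators $X$; the universal property then produces a unique bialgebra morphism from $(\hrts, m_{\mathrm{RF}}, \Delta_\mu)$ to the target, which must coincide with $\phi_\lambda$.

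First I would verify directly that $\phi_\lambda$ is an algebra morphism. Since $m_{\mathrm{RF}}$ is the concatenation of forests and $d_X$ counts $X$-decorated leaves, one has $d_X(FG) = d_X(F) + d_X(G)$ and $d_X(1) = 0$, whence
\begin{align*}
\phi_\lambda(FG) = \lambda^{d_X(F)+d_X(G)}FG = \phi_\lambda(F)\phi_\lambda(G), \qquad \phi_\lambda(1) = 1.
\end{align*}
This step is independent of the coproduct parameter. For the coalgebra compatibility, I would rescale each $B^{+}_\omega$ on the target side by a suitable factor in $\lambda$, so that the resulting symmetric Hochschild 1-cocycle identity holds with parameter $\mu$ on the target. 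Mapping each $x \in X$ to $\lambda x$ and each $B^{+}_\omega$ on the source to its rescaled counterpart on the target then satisfies the hypotheses of the universal property and extends uniquely to a bialgebra morphism $\hrts \to \hrts$.

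The last step is to identify this universal morphism with $\phi_\lambda$, which I would do by induction on the number of vertices. Every decorated forest is generated from the leaves $x \in X$ by applying operators $B^{+}_\omega$ and taking concatenation products. On an $X$-leaf the universal morphism equals $\lambda x = \phi_\lambda(x)$; under concatenation the required equality was already established in the first step; and since $B^{+}_\omega$ neither creates nor destroys $X$-decorated leaves, one has $d_X(B^{+}_\omega(F')) = d_X(F')$ whenever $F' \neq 1$ and $d_X(B^{+}_\omega(1)) = 0$, so the rescaling chosen for the operators transports the $\lambda$-factors consistently.

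The main obstacle is the precise bookkeeping for the rescaling of $B^{+}_\omega$ on the target: the factor must be chosen so that both the symmetric 1-cocycle condition recovers parameter $\mu$ and the induced morphism evaluates to $\lambda^{d_X(F)}F$ on an arbitrary $F$. Once this matching is set up, the coalgebra identity $(\phi_\lambda \otimes \phi_\lambda)\circ \Delta_\mu = \Delta_{\lambda\mu}\circ \phi_\lambda$ and the counit compatibility follow automatically from the uniqueness clause of the universal property, yielding the corollary.
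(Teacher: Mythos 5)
Your overall strategy --- realize the source as the free $\Omega$-cocycle bialgebra with parameter $\mu$ (Theorem~\mref{thm:freecocycle}), feed the generator assignment $x\mapsto \lambda\bullet_x$ into the universal property, and identify the resulting operated bialgebra morphism with $\phi_\lambda$ by induction --- is exactly the paper's. However, the step you single out as ``the main obstacle,'' namely choosing a suitable $\lambda$-rescaling of the operators $B_\omega^+$ on the target ``so that the symmetric 1-cocycle identity holds with parameter $\mu$,'' rests on a misconception and is left unresolved in your write-up. The symmetric $1$-cocycle condition~\meqref{eq:eqiterated} does not involve the deformation parameter at all: it reads $\Delta P_\omega=(P_\omega\ot\id)\Delta+(\id\ot P_\omega)\Delta$, and by Eq.~\meqref{eq:cdbp} the \emph{unrescaled} operators $B_\omega^+$ already satisfy it with respect to $\Delta_{\lambda\mu}$ (the parameter $\nu$ in $\Delta_\nu$ enters only through the value of $\Delta_\nu$ on $\bullet_x$, not through the cocycle identity). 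So $(\hrts, m_{\mathrm{RF}},\etree,\Delta_{\lambda\mu},\counit,\{B_\omega^+\})$ is an $\Omega$-cocycle bialgebra as it stands; no rescaling is needed, and in fact none is permissible: an $\Omega$-operated morphism intertwining $B_\omega^+$ on the source with $c\,B_\omega^+$ on the target for some $c\neq 1$ would acquire a factor of $c$ for every internal vertex, contradicting the target formula $\lambda^{d_X(F)}F$, in which only the $X$-decorated leaves contribute. Your own observation that $d_X(B_\omega^+(F'))=d_X(F')$ already forces the factor to be $1$.

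The place where $\mu$ genuinely enters is the hypothesis on generators in Definition~\mref{defn:xcobi}\,\mref{it:def4}: you must verify that $f(x)=\lambda\bullet_x$ satisfies
\begin{align*}
\Delta_{\lambda\mu}(f(x))=f(x)\ot\etree+\etree\ot f(x)+\mu\, f(x)\ot f(x),
\end{align*}
which is the one-line computation the paper's proof consists of: $\Delta_{\lambda\mu}(\lambda\bullet_x)=\lambda\bullet_x\ot\etree+\etree\ot\lambda\bullet_x+\lambda^2\mu\,\bullet_x\ot\bullet_x$, and $\lambda^2\mu=\mu\cdot\lambda\cdot\lambda$. Once you replace the rescaling discussion by this verification (and take the operators on the target to be the $B_\omega^+$ themselves), the remainder of your argument --- additivity of $d_X$ under concatenation, invariance of $d_X$ under $B_\omega^+$, induction on the number of vertices to identify the universal morphism with $\phi_\lambda$ --- goes through and coincides with the paper's proof.
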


\subsection{Rota-Baxter operators on Hopf algebras}
Lie groups and Lie algebras are fundamental concepts in algebra. Recently, Lang, Sheng, and Guo~\mcite{GLS21} introduced the notion of Rota-Baxter Lie groups and established that the differentiation of a Rota-Baxter Lie group yields a Rota-Baxter Lie algebra of weight 1. This result generalizes the classical fact that the differentiation of a Lie group gives rise to a Lie algebra. 

Lie algebras and Lie groups can be regarded as fundamental examples of cocommutative Hopf algebras. In~\mcite{Gon21}, Goncharov combined the concept of Rota-Baxter operators of weight 1 on Lie algebras with that of Rota-Baxter operators on groups and provided the definition of a Rota-Baxter operator of weight 1 on cocommutative Hopf algebras.

In this paper, we prove that the antipode of the Moerdijk Hopf algebra on decorated rooted forests is a Rota-Baxter operator. This discovery enriches the examples related to commutative Hopf algebras.

{\bf Structure of the Paper.}
In Section~\mref{sec:ibw}, we first recall some basic definitions and facts on decorated rooted trees and forests. By the symmetric 1-cocycle condition (Eq.~(\mref{eq:scocycle})), for a fixed $\lambda \in {\bf k}$, we define a coproduct $\coll$ on decorated planar rooted forests $\hrts$ to equip it with a new coalgebra structure (Theorem~\mref{thm:coalgebra}) and further a bialgebra structure (Theorem~\mref{thm:main}). Then we give a combinatorial description of the coproduct $\coll$ (Theorem~\mref{thm:comcoproduct}). As the coproduct $\Delta=\Delta_0$ is homogeneous, it induces a product $\star$ on $\hrts$. Using the concept of forest-representable matrices, we also give a description of the product $\star$ (Theorem~\mref{thm:dul}).

In Section~\mref{sec:idc}, a Moerdijk  Hopf algebra on decorated planar rooted forests is constructed for the case of $\lambda=0$ (Theorem~\mref{cor:hopf}) and a combinatorial description of the antipode is given (Theorem~\mref{thm:comantipode}). Since the constructed Hopf algebra is cocommutative, the antipode is a Rota-Baxter operator on the Hopf algebra (Proposition~\mref{prop:comantipode}).

In Section~\mref{sec:icw}, we give the concept of $\Omega$-cocycle bialgebras (Hopf algebras) and show the constructed one is a free object in such category (Theorem~\mref{thm:freecocycle}). As an application, we use the universal properties to get a bialgebra morphism between different $\Omega$-cocycle  bialgebras (Corollary~\mref{coro:uni}).

{\bf Notation.}
Throughout this paper, let $\bfk$ be a unitary commutative ring unless the contrary is specified,
which will be the base ring of all modules, algebras, coalgebras, bialgebras, tensor products, as well as linear maps.

\section{Bialgebras of decorated planar rooted forests}\label{sec:ibw}

In this section, we  construct a coproduct on decorated planar rooted forests based on a symmetric 1-cocycle condition, thereby equipping the space with a bialgebra structure. A combinatorial characterization of this coproduct is provided, and the dual coproducts are described explicitly via the newly introduced concept of forest-representable matrices.

\subsection{Decorated planar rooted forests}\mlabel{sucsec:deco}

In this subsection, we recall some basic definitions and facts on decorated rooted trees and forests that will be used in this paper, see ~\mcite{Foi02,  Guo09, PZGL, Sta97} for more details.

\begin{defn}
A {\bf rooted tree} consists of a connected, acyclic collection of vertices and directed edges, with a uniquely designated vertex called the {\bf root}. A {\bf planar rooted tree} is a rooted tree equipped with a fixed planar embedding that determines the order of branches.
\end{defn}

%

Denote by $\calt$ the set of all planar rooted trees. Let $M(\calt)$ be the free monoid generated by $\calt$ under the operation of concatenation. The identity element of $M(\calt)$, referred to as the {\bf empty tree}, is denoted by $1$. Any non-identity element of $M(\calt)$, called a {\bf planar rooted forest}, is a noncommutative concatenation of planar rooted trees, and can be expressed as $F = T_1 \cdots T_n$ with $T_1, \ldots, T_n \in \calt$. We adopt the convention that $F=1$ when $n=0$.

Given a nonempty set $\Omega$ and a set $X$ satisfying $X \cap \Omega = \emptyset$, we denote by $\rts$ the set of planar rooted trees whose internal vertices are decorated exclusively by elements of $\Omega$, while the leaves are decorated by elements from $X \sqcup \Omega$. Define $\rfs = M(\rts)$ to be the free monoid generated by $\rts$ with concatenation as the product. Elements of $\rfs$ are referred to as {\bf decorated planar rooted forests}.

\begin{exam}
The following are some examples in $\rts$:
$$\tdun{$\alpha$},\ \, \tdun{$x$},\ \, \tddeux{$\alpha$}{$\beta$},\ \,  \tddeux{$\alpha$}{$x$}, \ \, \tdtroisun{$\alpha$}{$\beta$}{$\gamma$},\ \,\tdtroisun{$\alpha$}{$x$}{$\gamma$}, \ \,\tdtroisun{$\alpha$}{$x$}{$y$}, \ \, \tdquatretrois{$\alpha$}{$\beta$}{$\gamma$}{$\beta$},\ \, \tdquatretrois{$\alpha$}{$\beta$}{$\gamma$}{$x$}, \ \, \tdquatretrois{$\alpha$}{$\beta$}{$x$}{$y$},$$
with $\alpha,\beta,\gamma\in \Omega$ and $x, y \in X$.
\end{exam}

For $F = T_1 \cdots T_n \in \rfs$ with $n \geq 0$ and $T_1, \ldots, T_n \in \rts$, we define $\bre(F) := n$ to be the {\bf breadth} of $F$, that is, the number of trees in the forest. In particular, we adopt the convention that $\bre(\etree) = 0$ when $n = 0$.

Let $\bullet_{X} := \{\bullet_{x} \mid x \in X\}$, and define
\begin{align*}
\calf_0 := M(\bullet_{X}) = S(\bullet_{X}) \sqcup \{\etree\},
\end{align*}
where $M(\bullet_{X})$ (respectively, $S(\bullet_{X})$) denotes the submonoid (respectively, subsemigroup) of $\rfs$ generated by $\bullet_X$.

Assuming that $\calf_n$ has already been constructed for some $n \geq 0$, we define the next level recursively as
\begin{align*}
\calf_{n+1} := M\left(\bullet_{X} \sqcup \left(\bigsqcup_{\omega \in \Omega} B_{\omega}^{+}(\calf_n)\right)\right).
\end{align*}
This yields an ascending chain $\calf_n \subseteq \calf_{n+1}$, and hence
\begin{align*}
\rfs = \varinjlim \calf_n = \bigcup_{n=0}^{\infty} \calf_n.
\end{align*}
An element $F \in \calf_n \setminus \calf_{n-1}$ is said to have {\bf depth} $n$, and we denote this by $\dep(F) = n$.

%

\begin{exam}
Here are some examples about the depths of some decorated planar rooted forests.
\begin{align*}
\dep(\etree) =&\ \dep(\bullet_x) =0,\ \dep(\bullet_\omega)=\dep(B^+_{\omega}(\etree)) = 1,\  \dep(\tddeux{$\omega$}{$\alpha$})= \dep(B^+_{\omega}(B^+_{\alpha}(\etree))) =2, \\
\dep(\tdun{$x$}\tddeux{$\omega$}{$y$}\tdun{$y$}) =&\ \dep(\tddeux{$\omega$}{$y$})=
\dep(B^+_{\omega}(\bullet_y)) =1, \ \dep(\tdtroisun{$\omega$}{$x$}{$\alpha$}) = \dep(B^+_{\omega}(B^+_{\alpha}(\etree) \bullet_x)) = 2,
\end{align*}
where $\alpha,\omega\in \Omega$ and $x, y \in X$.
\end{exam}

%

Define
\begin{align*}
\hrts:= \bfk \rfs=\bfk M(\rts)
\end{align*}
to be the {\bf free $\bfk$-module} spanned by $\rfs$. Then $\hrts$ becomes the free noncommutative algebra generated by the decorated planar rooted trees $\rts$ under the concatenation product, denoted by $m_{\mathrm{RT}}$ and typically omitted for simplicity. For any $F \in \rfs$, the {\bf weight} of $F$ is defined to be the total number of vertices in $F$. With this notion, we obtain a natural grading
$\hrts=\mathop{\oplus} \limits_{n \geq 0} \hrts_n$,
where each $\hrts_n$ is the $\bfk$-linear span of all forests of weight $n$, making $\hrts$ into a graded algebra.

For each $\omega\in \Omega$, define
$$B^+_\omega:\hrts\to \hrts$$
to be the linear {\bf grafting operation}, which maps the unit $1$ to the single-node tree $\bullet_\omega$, and sends a rooted forest in $\hrts$ to the tree obtained by grafting all components onto a new root decorated by $\omega$.

\begin{exam}
The following are some grafting operations:
\begin{align*}
B_{\omega}^{+}(\etree)&=\tdun{$\omega$} \ ,&  B_{\omega}^{+}(\tdun{$x$}\tddeux{$\alpha$}{$\beta$})&=\tdquatretrois{$\omega$}{$\alpha$}{$\beta$}{$x$},&  B_{\omega}^{+}(\tddeux{$\alpha$}{$y$}\tdun{$x$})&=
\tdquatredeux{$\omega$}{$x$}{$\alpha$}{$y$},
\end{align*}
where $\alpha, \beta, \omega\in \Omega$ and $x, y \in X$.
\end{exam}

%
%

\subsection{From Cartier-Quillen cohomologies to symmetric 1-cocycle conditions}

Given an algebra $A$ and a bimodule $M$ over $A$, let $H^{*}(A, M)$ denote the {\bf Hochschild cohomology} of $A$ with coefficients in $M$, which is defined via a cochain complex whose cochains are maps $A^{\ot n} \rightarrow M$. For further details, see~\mcite{Lod92}.

Let $(C, \Delta)$ be a coalgebra and $(B, \delta_{G}, \delta_{D})$ a bicomodule over $C$. The {\bf Cartier-Quillen cohomology} of $C$ with coefficients in $B$ serves as a dual counterpart to the Hochschild cohomology. Specifically, it is defined as the cohomology of the complex $\mathrm{Hom}{\mathbf{k}}(B, C^{\ot n})$, with differentials $b_n: \mathrm{Hom}{\mathbf{k}}(B, C^{\ot n})\rightarrow \mathrm{Hom}{\mathbf{k}}(B, C^{\ot (n+1)})$ given by
\begin{align*}
b_n(L)=(\id \ot L)\circ \delta_{G}+\sum_{i=1}^{n}(-1)^i(\id _{C}^{\ot{(i-1)}}\ot \Delta \ot \id _{C}^{\ot{(n-i)}})L+(-1)^{n+1}(L\ot \id)\circ \delta_D,
\end{align*}
where $L: B\rightarrow C^{\ot n}$. Particularly, a linear map $L: B \rightarrow C$ is called a {\bf 1-cocycle} if it satisfies the following condition:
\begin{align*}
\Delta\circ L= (L\ot \id)\circ \delta_D + (\id \ot L)\circ \delta_{G},
\end{align*}
see~\mcite{Fo3, Moe01} for more details.

We consider the special case where the bicomodule is given by $(C, \delta_{G}, \delta_D)$ with $\delta_{G}=\delta_{D}=\Delta$. In this setting, a 1-cocycle is a linear endomorphism $L$ on $C$ satisfying
\begin{align}
\Delta \circ L(x)=(L \ot \id)\circ \Delta(x)+(\id \ot L)\circ \Delta(x), \quad \text{ for } x\in C.
\mlabel{eq:scocycle}
\end{align}
We refer to Eq.~(\mref{eq:scocycle}) as the {\bf symmetric 1-cocycle condition}.

\begin{remark}
\begin{enumerate}
\item When $L = B^+$, the symmetric 1-cocycle condition in Eq.~(\mref{eq:scocycle}) takes the form
\begin{equation*}
\Delta(F) = \Delta B^{+}(\lbar{F}) = (B^{+} \otimes \id) \Delta(\lbar{F}) + (\id \otimes B^{+}) \Delta(\lbar{F}) \quad \text{for } F = B^{+}(\lbar{F}) \in \mathcal{F},
\end{equation*}
which differs from the classical 1-cocycle condition introduced and studied by Connes-Kreimer\mcite{CK98}, given by
\begin{equation*}
\Delta(F) = \Delta B^{+}(\lbar{F}) = B^{+}(\lbar{F}) \otimes \etree + (\id \otimes B^{+}) \Delta(\lbar{F}),
\end{equation*}
which corresponds to the bicomodule $(C,\delta'_G,\delta'_R)$ with
\begin{align*}
\delta_R(x)=x\otimes 1,\delta_L(x)&=\Delta(x),
\end{align*}
for any $x \in C$.
\item Let $\mathbb{P}$ be a Hopf operad, and let $H$ be the algebra generated by finite rooted forests equipped with a linear endomorphism $\varrho$. Define maps $\sigma_i : H \rightarrow H$, for $i=1, 2$, by setting $\sigma_i(F)=q_i^n F$
for any forest $F$ with $n$ vertices, where $q_1, q_2 \in \bfk$. Moerdijk~\mcite{Moe01} showed that the pair
\begin{align*}
(\sigma_1, \sigma_2) = \sigma_1 \otimes \varrho + \varrho \otimes \sigma_2 : H \otimes H \to H \otimes H
\end{align*}
induces a $\mathbb{P}[t]$-algebraic structure on $H \otimes H$, which in turn gives rise to a family of Hopf $\mathbb{P}$-algebra structures parameterized by $\sigma_i$, $i=1, 2$. Notably, the choice $q_1=0$ and $q_2=1$ recovers the classical 1-cocycle relation, while the case $q_1 = q_2 = 1$ corresponds to the symmetric 1-cocycle condition employed in the present work; see~\cite{LM06, Moe01} for further details.
\end{enumerate}

\end{remark}

\subsection{A bialgebra structure on decorated planar rooted forests}
For the rest of the paper, we assume that $\lambda \in \bf k$ is a fixed element. In this subsection, we shall construct a bialgebra structure on decorated planar rooted forests.

First we define a new coproduct $\coll$ on $\hrts$ by induction on depth. By linearity, we only need to define $\coll(F)$ for basis elements $F\in \rfs$.
For the initial step of $\dep(F)=0$, we  define
\begin{equation}
\coll(F) :=
\left\{
\begin{array}{ll}
\etree \ot \etree, & \text{ if } F = \etree, \\
\bullet_{x} \ot 1+1 \ot \bullet_{x}+\lambda \bullet_{x} \ot \bullet_{x}, & \text{ if } F = \bullet_x \text{ for some } x \in X,\\
\coll(\bullet_{x_1}) \cdots \coll(\bullet_{x_{m}}), & \text{ if }  F=\bullet_{x_{1}}\cdots \bullet_{x_{m}} \text{ with } m\geq 2 \text{ and } x_i \in X.
\end{array}
\right.
 \mlabel{eq:dele}
\end{equation}
For the induction step of $\dep(F)\geq 1$, we reduce the definition to induction on breadth.
If $\bre(F) = 1$, we
write $F=B_{\omega}^{+}(\lbar{F})$ for some $\omega\in \Omega$ and $\lbar{F}\in \rfs$, and define
\begin{equation}
\coll(F)=\coll B_{\omega}^{+}(\lbar{F}) := (B_{\omega}^{+}\otimes\id)\coll(\lbar{F}) + (\id\otimes B_{\omega}^{+})\coll(\lbar{F}).
\mlabel{eq:dbp}
\end{equation}
In other words
\begin{align}
\coll B_{\omega}^{+}=(B_{\omega}^{+} \otimes \id + \id\otimes B_{\omega}^{+})\coll.
\mlabel{eq:cdbp}
\end{align}
If $\bre(F) \geq 2$, we write $F=T_{1}T_{2}\cdots T_{m}$ with $m\geq 2$ and $T_1, \ldots, T_m \in \rts$, and define
\begin{equation}
\coll(F)=\coll(T_{1}) \cdots \coll(T_{m}).
\mlabel{eq:delee1}
\end{equation}

\begin{exam}\mlabel{exam:cop}
Let $x,y \in X$ and $\alpha, \beta, \gamma \in \Omega$. Then
\begin{align*}
\coll(\tdun{$\alpha$})=&\ \tdun{$\alpha$}\ot \etree+\etree \ot \tdun{$\alpha$},\\
\coll(\tddeux{$\alpha$}{$x$})=&\ \tddeux{$\alpha$}{$x$} \ot \etree+\tdun{$\alpha$} \ot \tdun{$x$}+\tdun{$x$} \ot \tdun{$\alpha$}+ \etree \ot \tddeux{$\alpha$}{$x$}+ \lambda \tddeux{$\alpha$}{$x$} \ot \tdun{$x$}+\lambda \tdun{$x$} \ot \tddeux{$\alpha$}{$x$},\\
\coll(\tdun{$x$}\tddeux{$\alpha$}{$y$})=&\ \tdun{$x$}\tddeux{$\alpha$}{$y$} \ot \etree+ \tddeux{$\alpha$}{$y$} \ot \tdun{$x$}
+\tdun{$x$}\tdun{$\alpha$} \ot \tdun{$y$}+\tdun{$\alpha$} \ot \tdun{$x$}\tdun{$y$} +\tdun{$x$}\tdun{$y$}\ot \tdun{$\alpha$}+ \tdun{$y$} \ot \tdun{$x$}\tdun{$\alpha$}\\
&\ +\tdun{$x$} \ot \tddeux{$\alpha$}{$y$}+ \etree \ot \tdun{$x$}\tddeux{$\alpha$}{$y$}+ \lambda \tdun{$x$}\tddeux{$\alpha$}{$y$} \ot \tdun{$x$}+ \lambda \tdun{$x$}\tdun{$\alpha$} \ot \tdun{$x$}\tdun{$y$}+\lambda \tddeux{$\alpha$}{$y$} \ot \tdun{$x$} \tdun{$y$}+\lambda \tdun{$x$}\tddeux{$\alpha$}{$y$} \ot \tdun{$y$}\\
&\ +\lambda^2 \tdun{$x$}\tddeux{$\alpha$}{$y$} \ot \tdun{$x$}\tdun{$y$}+ \lambda \tdun{$x$} \tdun{$y$} \ot \tdun{$x$} \tdun{$\alpha$}
+\lambda \tdun{$x$} \ot \tdun{$x$}\tddeux{$\alpha$}{$y$}+ \lambda \tdun{$x$} \tdun{$y$} \ot \tddeux{$\alpha$}{$y$}+ \lambda \tdun{$y$} \ot \tdun{$x$}\tddeux{$\alpha$}{$y$}+\lambda^2 \tdun{$x$}\tdun{$y$} \ot \tdun{$x$}\tddeux{$\alpha$}{$y$} ,\\
\coll( \tdtroisun{$\alpha$}{$\beta$}{$x$})=&\ \tdtroisun{$\alpha$}{$\beta$}{$x$} \ot \etree+ \tdun{$x$}\tdun{$\beta$} \ot \tdun{$\alpha$}+\tddeux{$\alpha$}{$\beta$} \ot \tdun{$x$}+\tdun{$\beta$} \ot \tddeux{$\alpha$}{$x$}+\tddeux{$\alpha$}{$x$} \ot \tdun{$\beta$}+\tdun{$x$} \ot \tddeux{$\alpha$}{$\beta$}+ \tdun{$\alpha$} \ot \tdun{$x$}\tdun{$\beta$}+ \etree \ot \tdtroisun{$\alpha$}{$\beta$}{$x$}\\
&\ +\lambda \tdtroisun{$\alpha$}{$\beta$}{$x$} \ot \tdun{$x$}+ \lambda \tdun{$x$} \tdun{$\beta$} \ot \tddeux{$\alpha$}{$x$}+ \lambda \tddeux{$\alpha$}{$x$} \ot \tdun{$x$}\tdun{$\beta$}+ \lambda \tdun{$x$} \ot \tdtroisun{$\alpha$}{$\beta$}{$x$}.
\end{align*}
\end{exam}

Then we record the following two lemmas as a preparation.

\begin{lemma}
Let $F_1, F_2\in \hrts$. Then $\coll(F_1F_2)=\coll(F_1)\coll(F_2)$.
\mlabel{lem:colmor}
\end{lemma}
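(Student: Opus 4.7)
The plan is to reduce the identity to basis elements via bilinearity and then read it off directly from the definitions of $\coll$ in Eqs.~\eqref{eq:dele}, \eqref{eq:dbp}, and \eqref{eq:delee1}. Since both sides of $\coll(F_1F_2)=\coll(F_1)\coll(F_2)$ are bilinear in $(F_1,F_2)$, I may assume $F_1,F_2\in \rfs$. I would then write $F_1=T_1\cdots T_m$ and $F_2=S_1\cdots S_n$ with $T_1,\ldots,T_m,S_1,\ldots,S_n\in\rts$ and $m,n\geq 0$, under the convention that the empty product of trees is $\etree$; the concatenation product then takes the form $F_1F_2=T_1\cdots T_m S_1\cdots S_n$, a forest of breadth $m+n$.

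The observation on which everything pivots is the uniform factorization $\coll(F)=\coll(T_1)\cdots\coll(T_m)$ for every $F=T_1\cdots T_m\in\rfs$ with $m\geq 0$, where an empty product on the right is understood as $\etree\otimes\etree$, the unit of $\hrts\otimes\hrts$. For $m=0$ this is the first clause of Eq.~\eqref{eq:dele}; for $m=1$ it is a tautology; for $m\geq 2$ it is exactly Eq.~\eqref{eq:delee1} in the positive-depth case, and the third clause of Eq.~\eqref{eq:dele} in the depth-zero case. No induction on depth is needed for this factorization, since the defining recursion for $\coll$ is applied by breadth first and only afterwards by depth inside each tree.

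Granting this factorization and applying it to $F_1$, $F_2$, and $F_1F_2$ separately, I obtain $\coll(F_1F_2)=\coll(T_1)\cdots\coll(T_m)\coll(S_1)\cdots\coll(S_n)=\coll(F_1)\coll(F_2)$, which finishes the argument. There is essentially no substantive obstacle here; the lemma is a bookkeeping statement saying that $\coll$ has been set up precisely so as to be multiplicative in the concatenation of trees, and it will be used silently in the later proofs that $\coll$ is coassociative, compatible with the grafting operators $B_\omega^+$, and part of a bialgebra structure on $\hrts$.
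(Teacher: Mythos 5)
Your proposal is correct and matches the paper, whose entire proof is the one-line remark that the identity follows directly from the definition of $\coll$; you have simply made explicit the uniform factorization $\coll(T_1\cdots T_m)=\coll(T_1)\cdots\coll(T_m)$ (valid for all $m\geq 0$ by Eqs.~(\ref{eq:dele}) and (\ref{eq:delee1})) that the paper leaves implicit. No gap and no genuinely different route.
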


\begin{proof}
It follows directly from the definition of $\col$.
\end{proof}

The following lemma shows that $\hrts$ is closed under the coproduct $\col$.

\begin{lemma} \label{lem:cclosed}
For $F\in \hrts$,
\begin{align*}
\coll(F)\in \hrts\ot\hrts.
\end{align*}
\end{lemma}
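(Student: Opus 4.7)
The proof will proceed by a straightforward double induction, first on $\dep(F)$ and then, within each depth, on $\bre(F)$. By $\bfk$-linearity it suffices to check the statement on basis elements $F \in \rfs$.

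For the base case $\dep(F) = 0$, the element $F$ is either $\etree$ or a concatenation $\bullet_{x_1}\cdots\bullet_{x_m}$ with $x_i \in X$. In each of the three subcases of equation~(\mref{eq:dele}), the right-hand side is a manifest element of $\hrts \otimes \hrts$, so the base case is immediate.

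For the inductive step, suppose the statement holds for all forests of depth strictly less than $n$, and let $F \in \rfs$ with $\dep(F) = n \geq 1$. I will subdivide on breadth. If $\bre(F) = 1$, then $F = B^+_\omega(\lbar{F})$ for some $\omega \in \Omega$ and $\lbar{F} \in \rfs$ with $\dep(\lbar{F}) < n$. By the induction hypothesis on depth, $\coll(\lbar{F}) \in \hrts \otimes \hrts$. Since $B^+_\omega$ is a $\bfk$-linear endomorphism of $\hrts$, both $(B^+_\omega \otimes \id)\coll(\lbar{F})$ and $(\id \otimes B^+_\omega)\coll(\lbar{F})$ lie in $\hrts \otimes \hrts$, and hence so does their sum $\coll(F)$ by equation~(\mref{eq:dbp}). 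If $\bre(F) = m \geq 2$, write $F = T_1 \cdots T_m$ with $T_i \in \rts$. Each $T_i$ satisfies $\dep(T_i) \leq n$ and $\bre(T_i) = 1 < m$, so by the inner induction on breadth (using the outer induction on depth when $\dep(T_i) = n$, which applies since $\bre(T_i) < \bre(F)$) we have $\coll(T_i) \in \hrts \otimes \hrts$. By equation~(\mref{eq:delee1}), $\coll(F) = \coll(T_1)\cdots \coll(T_m)$, which is a product of elements of $\hrts \otimes \hrts$ in the algebra $\hrts \otimes \hrts$, and therefore belongs to $\hrts \otimes \hrts$.

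There is no substantive obstacle here: the recursive definition was crafted precisely so that closure under $\coll$ is transparent at each step. The only point requiring a small amount of care is the organization of the induction so that the breadth recursion at a given depth only invokes $\coll(T_i)$ for trees $T_i$ of strictly smaller breadth at the same (or smaller) depth, which is fine because each $T_i$ has breadth~$1$. Lemma~\mref{lem:colmor} is essentially the content of the $\bre(F)\geq 2$ case and will be used implicitly to guarantee that products of elements of $\hrts \otimes \hrts$ stay in $\hrts \otimes \hrts$.
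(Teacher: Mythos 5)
Your proof is correct and follows essentially the same route as the paper's, which simply observes that the claim follows from the definition of $\coll$ by induction on $\dep(F)$; you have spelled out the depth/breadth induction in full detail, with the key points being that $B_\omega^+$ is by definition an endomorphism of $\hrts$ and that $\hrts\ot\hrts$ is closed under multiplication. The paper's version additionally remarks that the real content is that no tensor factor can acquire an internal vertex decorated by an element of $X$, but this is already implicit in your use of the fact that $B_\omega^+$ maps $\hrts$ into $\hrts$.
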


\begin{proof}
By the construction of decorated planar rooted forests, it is enough to show that $\coll(F)$ for basis elements $F\in \rfs$ is a sum of tensor products of decorated planar rooted forests whose internal vertices are not decorated by $X$.
Then this result follows by the definition of $\coll$ and the induction on $\dep(F)\geq 0$.
\end{proof}

Define a linear map $\counit: \hrts \rightarrow \bfk$ as follows: for $F \in \rfs$
\begin{equation}
\counit(F) :=
\left\{
\begin{array}{ll}
1_{\bfk}, & \text{ if } F = \etree, \\
0, & \text{ otherwise }  .
\end{array}
\right.
\mlabel{eq:counit}
\end{equation}

\begin{lemma}\label{lem:counitmor}
Let $F_1, F_2 \in \hrts$. Then
\begin{align}
\counit(F_1F_2)=\counit(F_1)\counit(F_2).
\mlabel{eq:counitmor}
\end{align}
\end{lemma}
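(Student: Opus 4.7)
The plan is to verify the identity first on a basis of $\hrts\otimes\hrts$ and then invoke bilinearity, since both sides of Eq.~(\mref{eq:counitmor}) are bilinear in $(F_1, F_2)$. So assume $F_1, F_2 \in \rfs$ are basis elements.

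The main observation I would use is that $\rfs = M(\rts)$ is the free monoid on $\rts$ with identity $\etree$; consequently, for $F_1, F_2 \in \rfs$, the concatenation $F_1 F_2$ equals $\etree$ if and only if both $F_1 = \etree$ and $F_2 = \etree$. From this I would split into cases:
\begin{enumerate}
\item If $F_1 = F_2 = \etree$, then $F_1 F_2 = \etree$, so both sides equal $1_{\bfk}$ by the first branch of~(\mref{eq:counit}).
\item If exactly one of $F_1, F_2$ equals $\etree$ (say $F_1 = \etree$, $F_2 \neq \etree$), then $F_1 F_2 = F_2 \neq \etree$, so $\counit(F_1 F_2) = 0 = 1_{\bfk} \cdot 0 = \counit(F_1)\counit(F_2)$.
\item If neither equals $\etree$, then $F_1 F_2 \neq \etree$, so both sides evaluate to $0$.
\end{enumerate}

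In all cases the equality holds on basis pairs, and extending linearly in each argument finishes the proof. I do not anticipate any genuine obstacle here: the statement is exactly the assertion that $\counit$ is a unital algebra morphism $\hrts \to \bfk$, which follows immediately from the fact that $\etree$ is the multiplicative identity of $\hrts$ together with the defining formula~(\mref{eq:counit}) of $\counit$ as the characteristic function of $\{\etree\}$.
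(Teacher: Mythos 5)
Your proposal is correct and follows essentially the same route as the paper: reduce to basis elements of $\rfs$ by (bi)linearity, then split according to whether the factors are empty, using that $F_1F_2=\etree$ forces $F_1=F_2=\etree$ in the free monoid. The paper merely merges your first two cases into one ``without loss of generality $F_1=\etree$'' step; the content is identical.
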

\begin{proof}
By the linearity of $\varepsilon$, we only need to prove Eq.~(\mref{eq:counitmor}) holds for $F_1,F_2 \in \rfs$. If $F_1=\etree$ or $F_2=\etree$, without loss of generality, assume $F_1=\etree$. Then
\begin{align*}
\counit(F_1F_2)=\counit(F_2)=\counit(F_1)\counit(F_2).
\end{align*}
If $F_1 \neq \etree$ and $F_2 \neq \etree$, then $F_1F_2 \neq \etree$. Hence both sides of Eq.~(\mref{eq:counitmor}) are zero by Eq.~(\mref{eq:counit}), as desired.
\end{proof}

\begin{theorem}
The triple $(\hrts, \,\coll, \, \counit)$ is a coalgebra.
\mlabel{thm:coalgebra}
\end{theorem}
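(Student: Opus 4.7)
The plan is to establish the three coalgebra axioms, namely the two counit identities $(\counit \ot \id)\coll = \id = (\id \ot \counit)\coll$ and the coassociativity $(\coll \ot \id)\coll = (\id \ot \coll)\coll$, on basis elements $F \in \rfs$ by a double induction: outer induction on $\dep(F)$ and, within a fixed depth, inner induction on $\bre(F)$. At every stage the breadth-$\geq 2$ case reduces instantly to breadth $1$, because both sides of each axiom are algebra morphisms on $\hrts$ by Lemma~\mref{lem:colmor} and Lemma~\mref{lem:counitmor}, so equality on $F = T_1 \cdots T_m$ follows from equality on each $T_i$.

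For the counit identities, the base cases $F = \etree$ and $F = \bullet_x$ are direct evaluations of Eq.~(\mref{eq:dele}); the key point is that $\counit(\bullet_x) = 0$ kills the two summands of $\coll(\bullet_x)$ containing $\bullet_x$ on the contracted side, leaving exactly $\bullet_x$ on the other side. For the depth-induction step with $\bre(F) = 1$, write $F = B_\omega^+(\lbar{F})$ and apply $\counit \ot \id$ (respectively $\id \ot \counit$) to Eq.~(\mref{eq:cdbp}). Since $\counit \circ B_\omega^+ = 0$ (a grafted tree is never the empty forest), only the summand carrying $B_\omega^+$ on the uncontracted side survives, and the induction hypothesis on $\lbar{F}$ identifies the result with $B_\omega^+(\lbar{F}) = F$.

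For coassociativity, the base $F = \etree$ is immediate, and $F = \bullet_x$ expands on both sides into the same seven-term sum because of the symmetry of $\coll(\bullet_x) = \bullet_x \ot 1 + 1 \ot \bullet_x + \lambda \bullet_x \ot \bullet_x$; this is a routine expansion producing the terms $\bullet_x \ot 1 \ot 1$, $1 \ot \bullet_x \ot 1$, $1 \ot 1 \ot \bullet_x$ together with the three mixed $\lambda$-terms and one $\lambda^2 \bullet_x \ot \bullet_x \ot \bullet_x$. For the induction step $F = B_\omega^+(\lbar{F})$ with $\bre(F) = 1$, applying Eq.~(\mref{eq:cdbp}) twice yields
\begin{align*}
(\coll \ot \id)\coll \circ B_\omega^+ &= \bigl(B_\omega^+ \ot \id \ot \id + \id \ot B_\omega^+ \ot \id + \id \ot \id \ot B_\omega^+\bigr)\,(\coll \ot \id)\coll,\\
(\id \ot \coll)\coll \circ B_\omega^+ &= \bigl(B_\omega^+ \ot \id \ot \id + \id \ot B_\omega^+ \ot \id + \id \ot \id \ot B_\omega^+\bigr)\,(\id \ot \coll)\coll,
\end{align*}
after which the induction hypothesis on $\lbar{F}$ equates the two sides.

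The main obstacle is purely bookkeeping. Compared with the classical Connes-Kreimer coproduct, the novel feature is the extra diagonal term $\lambda \bullet_x \ot \bullet_x$ appearing at depth $0$, which must be tracked consistently through the $\bullet_x$ base case of coassociativity and through the products of $\bullet_x$'s handled by multiplicativity; however, the symmetric form of Eq.~(\mref{eq:cdbp}) is precisely designed so that $B_\omega^+$ distributes evenly across all three tensor factors after one application of coassociativity, making the inductive step essentially automatic once the depth-$0$ base cases are dispatched.
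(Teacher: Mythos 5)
Your proposal is correct and follows essentially the same route as the paper: the paper packages your double induction as the observation that the set of elements satisfying each axiom is a subalgebra of $\hrts$ containing every $\bullet_x$ and stable under every $B_\omega^+$ (using Lemmas~\mref{lem:colmor} and~\mref{lem:counitmor} for the breadth reduction and Eq.~(\mref{eq:cdbp}) for the grafting step), hence all of $\hrts$. The base-case computations for $\bullet_x$ and the twofold application of the symmetric cocycle identity in the inductive step match the paper's calculations exactly.
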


\begin{proof}
By the linearity of $\coll$ and $\counit$, we only need to prove the coassociativity of $\coll$ and the counity of $\counit$. Define
\begin{align*}
\mathcal{A}:= \{F \in \hrts \, \mid \, (\coll \ot \id) \coll(F)=(\id \ot \coll)\coll(F) \}.
\end{align*}
Note that $1 \in \mathcal{A}$ by $\coll(1)=1 \ot 1$. By Lemma~\mref{lem:colmor}, $\coll$ is an algebra homomorphism, hence both $(\coll \ot \id) \coll$ and $(\id \ot \coll) \coll$ are algebra homomorphisms and so $A$ is a subalgebra of $\hrts$. For any $x \in X$, we have
\begin{align*}
&\ (\coll \ot \id)\coll(\bullet_{x})\\
=&\ (\coll \ot \id)(\bullet_{x} \ot \etree + \etree \ot \bullet_{x}+\lambda \bullet_{x} \ot \bullet_{x})\\
=&\ \bullet_{x} \ot \etree \ot \etree+ \etree \ot \bullet_{x} \ot \etree+ \lambda \bullet_{x} \ot \bullet_{x} \ot \etree+ \etree \ot \etree \ot \bullet_{x}+ \lambda \bullet_{x} \ot \etree \ot \bullet_{x}+\lambda \etree \ot \bullet_{x} \ot \bullet_{x}\\
&\ + \lambda^2 \bullet_{x} \ot \bullet_{x} \ot \bullet_{x}\\
\end{align*}
and
\begin{align*}
&\ (\id \ot \coll)\coll(\bullet_{x})\\
=&\ (\id \ot \coll)(\bullet_{x} \ot \etree + \etree \ot \bullet_{x} + \lambda \bullet_{x} \ot \bullet_{x})\\
=&\ \bullet_{x} \ot \etree \ot \etree + \etree \ot \bullet_{x} \ot \etree + \etree \ot \etree \ot \bullet_{x} +\lambda \etree \ot \bullet_{x} \ot \bullet_{x}+ \lambda \bullet_{x} \ot \bullet_{x} \ot \etree+ \lambda \bullet_{x} \ot \etree \ot \bullet_{x} \\
&\ +\lambda^2 \bullet_{x} \ot \bullet_{x} \ot \bullet_{x},
\end{align*}
hence $\bullet_{x} \in \mathcal{A}$. For $F \in \mathcal{A}$ and $\omega \in \Omega$, we have
\begin{align*}
&\ (\coll \ot \id) \coll(B_{\omega}^{+}(F))\\
=&\ (\coll \ot \id)(B_{\omega}^{+} \ot \id + \id \ot B_{\omega}^{+})\coll(F)\\
=&\ (\coll B_{\omega}^{+} \ot \id+ \coll \ot B_{\omega}^{+})\coll(F)\\
=&\ (B_{\omega}^{+} \ot \id \ot \id+ \id \ot B_{\omega}^{+} \ot \id+ \id \ot \id \ot B_{\omega}^{+})(\coll \ot \id)\coll(F)
\end{align*}
and
\begin{align*}
&\ (\id \ot \coll) \coll(B_{\omega}^{+}(F))\\
=&\ (\id \ot \coll)(B_{\omega}^{+} \ot \id + \id \ot B_{\omega}^{+})\coll(F)\\
=&\ (B_{\omega}^{+} \ot \coll+ \id \ot \coll B_{\omega}^{+})\col(F)\\
=&\ (B_{\omega}^{+} \ot \id \ot \id+\id \ot B_{\omega}^{+} \ot \id + \id \ot \id \ot B_{\omega}^{+})(\id \ot \coll)\coll(F).
\end{align*}
Since $F \in \mathcal{A}$, $(\coll \ot \id)\coll(F)=(\id \ot \coll)\coll(F)$ and so
\begin{align*}
(\coll \ot \id)\coll(B_{\omega}^{+}(F))=(\id \ot \coll)\coll(B_{\omega}^{+}(F)),
\end{align*}
which means $B_{\omega}^{+}(F) \in \mathcal{A}$. Since $\mathcal{A}$ is a subalgebra of $\hrts$ containing all $\bullet_x$ and closed under $B_{\omega}^{+}$ for all $\omega \in \Omega$, $\mathcal{A}=\hrts$ and hence $\coll$ is coassociative. Next, define
\begin{align*}
\mathcal{B}:=\{F \in \hrts \, \mid \, (\counit \ot \id)\coll(F)=\beta_{l}(F) \, \text{ and } \, (\id \ot \counit)\coll(F)=\beta_{r}(F)  \},
\end{align*}
where $\beta_{l}: \hrts \rightarrow \bfk \ot \hrts, F \mapsto 1_{\bfk} \ot F$ and $\beta_{r}: \hrts \rightarrow \hrts \ot \bfk, F \mapsto F \ot 1_{\bfk}$. Note that $1 \in \mathcal{B}$ by $\coll(1)=1 \ot 1$ and $\counit(1)=1_{\bfk}$. By Lemmas~\mref{lem:colmor} and \mref{lem:counitmor}, $\coll$ and $\counit$ are algebra homomorphisms of $\hrts$, so $\mathcal{B}$ is a subalgebra of $\hrts$. For any $x \in X$, we have
\begin{align*}
(\counit \ot \id)\coll(\bullet_{x})=(\counit \ot \id)(\bullet_{x} \ot \etree + \etree \ot \bullet_{x}+ \lambda \bullet_{x} \ot \bullet_{x})=1_{\bfk} \ot \bullet_{x}
\end{align*}
and
\begin{align*}
(\id \ot \counit)\coll(\bullet_{x})=(\id \ot \counit)(\bullet_{x} \ot \etree + \etree \ot \bullet_{x} + \lambda \bullet_{x} \ot \bullet_{x})=\bullet_{x} \ot \etree,
\end{align*}
hence $\bullet_{x} \in \mathcal{B}$. For $F \in \mathcal{B}$ and $\omega \in \Omega$, we have
\begin{align*}
(\counit \ot \id)\coll(B_{\omega}^{+}(F))=(\counit \ot \id)(B_{\omega}^{+} \ot \id + \id \ot B_{\omega}^{+})\coll(F)=(\id \ot B_{\omega}^{+})(\counit \ot \id)\coll(F)
\end{align*}
and
\begin{align*}
(\id \ot \counit)\coll(B_{\omega}^{+}(F))=(\id \ot \counit)(B_{\omega}^{+} \ot \id + \id \ot B_{\omega}^{+})\coll(F)=(B_{\omega}^{+} \ot \id)(\id \ot \counit)\coll(F).
\end{align*}
As $F \in \mathcal{B}$, $(\counit \ot \id) \coll(F)=\beta_{l}(F)$ and $(\id \ot \counit)\coll(F)=\beta_{r}(F)$ and so
\begin{align*}
(\counit \ot \id)\coll(B_{\omega}^{+}(F))=(\id \ot B_{\omega}^{+}) \beta_{l}(F)=\beta_{l}(B_{\omega}^{+}(F))
\end{align*}
and
\begin{align*}
(\id \ot \counit)\coll(B_{\omega}^{+}(F))=(B_{\omega}^{+} \ot \id)\beta_{r}(F)=\beta_{r}(B_{\omega}^{+}(F)),
\end{align*}
which means $B_{\omega}^{+}(F) \in \mathcal{B}$. Since $\mathcal{B}$ is a subalgebra of $\hrts$ containing all $\bullet_x$ and closed under $B_{\omega}^{+}$ for all $\omega \in \Omega$, we have $\mathcal{B}=\hrts$ and so $\counit$ is a counit. Thus the triple $(\hrts, \,\coll, \, \varepsilon)$ is a coalgebra.
\end{proof}

Now we arrive at our main result in this subsection.

\begin{theorem} \mlabel{thm:main}
The quintuple $(\hrts, \conc, \etree, \coll, \counit)$ is a bialgebra.
\end{theorem}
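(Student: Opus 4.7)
The plan is to observe that all the nontrivial ingredients have already been assembled, and that the theorem follows by packaging them. The algebra structure $(\hrts, \conc, \etree)$ holds by construction, since $\hrts = \bfk M(\rts)$ is the free noncommutative unital algebra generated by $\rts$ under concatenation. The coalgebra structure $(\hrts, \coll, \counit)$ is the content of Theorem~\ref{thm:coalgebra}. So only the bialgebra compatibility axioms remain, namely that $\coll$ and $\counit$ are unital algebra homomorphisms with respect to $\conc$ and $\etree$.

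First I would record the unit compatibilities: the initial case of Eq.~\eqref{eq:dele} gives $\coll(\etree) = \etree \ot \etree$, and Eq.~\eqref{eq:counit} gives $\counit(\etree) = 1_{\bfk}$. These are precisely the statements that $\coll$ and $\counit$ preserve the unit. Next, the multiplicativity of $\coll$ on products of forests is exactly Lemma~\ref{lem:colmor}, while the multiplicativity of $\counit$ on products of forests is exactly Lemma~\ref{lem:counitmor}. These four identities together say that $\coll : \hrts \to \hrts \ot \hrts$ and $\counit : \hrts \to \bfk$ are morphisms in the category of unital $\bfk$-algebras (with the tensor product algebra structure on $\hrts \ot \hrts$).

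Combining the algebra axioms, the coalgebra axioms of Theorem~\ref{thm:coalgebra}, and the compatibility axioms above yields the bialgebra structure of $(\hrts, \conc, \etree, \coll, \counit)$, completing the proof. There is no genuine obstacle here, since the delicate work has already been done upstream: the recursive clause Eq.~\eqref{eq:delee1} is specifically designed so that $\coll$ extends multiplicatively to products of trees, and the same built-in multiplicativity is what made $\mathcal{A}$ and $\mathcal{B}$ subalgebras in the proof of Theorem~\ref{thm:coalgebra}.
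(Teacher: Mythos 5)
Your proposal is correct and follows essentially the same route as the paper: cite the algebra structure, the coalgebra structure from Theorem~\ref{thm:coalgebra}, and the multiplicativity of $\coll$ and $\counit$ from Lemmas~\ref{lem:colmor} and~\ref{lem:counitmor}, together with unit preservation. The only (cosmetic) difference is that the paper also explicitly invokes Lemma~\ref{lem:cclosed} to ensure $\coll$ lands in $\hrts\ot\hrts$, which you leave implicit.
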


\begin{proof}
Note that $(\hrts, \conc, \etree)$ is a unitary algebra. Then the result follows from Lemmas~\mref{lem:colmor}, \mref{lem:cclosed}, \mref{lem:counitmor} and Theorem~\mref{thm:coalgebra}.
\end{proof}
\subsection{A combinatorial description of $\coll$}
In this subsection, we give a combinatorial description of $\coll$. For simplicity, we denote the coproduct $\col_0$ by $\col$.

First we recall some notations for trees. As a rooted tree is a partially ordered set, for each vertex $v$ of a rooted tree, we call the vertices greater than $v$ the {\bf ancestors} of $v$ and the vertices lower than $v$ the {\bf descendants} of $v$. In particular, if $u$ covers $v$ in the partial order, we call $u$ the {\bf parent} of $v$ and $v$ a {\bf child} of $u$.

For a decorated planar rooted forest $F$, denote by $V(F)$ the set of vertices of $F$. Take a subset $I \subseteq V(F)$ and define the {\bf induced subforest} $F_I$ of $F$ to be the decorated planar rooted forests as follows:
\begin{enumerate}
\item the vertices of $F_I$ are the elements of $I$;

\item for each $v \in I$, if there are ancestors of $v$ in $I$, draw an incoming edge to $v$ from the smallest ancestor belonging to I (the smallest ancestor of $v$ exists since the ancestors of $v$ form a chain). If there is no ancestors of $v$ in $I$, then $v$ is a root in $F_I$.
\item since $F$ is a planar rooted forest, there is a unique way to arrange the trees in $F_I$ so that $F_I$ is a decorated planar rooted forest.
\end{enumerate}

\begin{remark}
\begin{enumerate}
\item For a leaf vertex $v \in I$, we see that $v$ is still a leaf in $F_I$ and so $F_I \in \hrts$ for all $I \subseteq V(F)$.

\item Since there are $2^{|V(F)|}$ subsets of $T_0$, there are $2^{|V(F)|}$ induced forests of $T$.

\item If $I=\emptyset$, then $F_{\emptyset}=1$; if $I=V(F)$, then $F_{V(F)}=F$.
\end{enumerate}
\end{remark}

\begin{exam}
Let $F=\tdun{$\alpha$} \tdtroisun{$\beta$}{$y$}{$x$}$.
Since the decorations of $F$ are all different, we use the decorations of the vertices to denote the vertices here. Then, all induced subforests of $F$ are:

\begin{table}[!htbp]
\centering
\begin{tabular}{|c|c|c|c|c|c|c|c|c|c|c|c|c|c|c} 
\hline
\text{subsets of $V(F)$}& $\emptyset$ & $\{ \alpha \}$ & $\{ \beta \}$ & $\{ x \}$ & $\{ y\}$ & $\{ \alpha,\beta \}$ & $\{\alpha,x \}$ & $\{\alpha,y \}$\\
\hline
\text{induced subforests} & $\etree$ & $\tdun{$\alpha$}$ & $\tdun{$\beta$}$ & $\tdun{$x$}$ & $\tdun{$y$}$& $\tdun{$\alpha$} \tdun{$\beta$}$ & $\tdun{$\alpha$} \tdun{$x$}$ & $\tdun{$\alpha$} \tdun{$y$}$ \\
\hline
\text{subsets of $V(F)$}& $\{\beta,x \}$ & $\{\beta,y \}$ & $\{x,y\}$ & $\{\alpha,\beta,x \}$ & $\{\alpha,\beta,y \}$ & $\{\alpha,x,y\}$ & $\{\beta,x,y \}$ & $\{\alpha,\beta,x,y\}$\\
\hline
\text{induced subforests} & $\tddeux{$\beta$}{$x$}$ & $\tddeux{$\beta$}{$y$}$ & $\tdun{$x$}$\tdun{$y$} & $\tdun{$\alpha$}\tddeux{$\beta$}{$x$}$ & $\tdun{$\alpha$}\tddeux{$\beta$}{$y$}$ & $\tdun{$\alpha$}\tdun{$x$}\tdun{$y$}$ & $\tdtroisun{$\beta$}{$y$}{$x$}$ & $\tdun{$\alpha$}\tdtroisun{$\beta$}{$y$}{$x$}$\\
\hline
\end{tabular} 
\end{table}
\end{exam}

Now we give a combinatorial description of the coproduct $\coll$.

\begin{theorem}\mlabel{thm:comcoproduct}
	For any decorated planar rooted forest $F \in \rfs$,
	\begin{align}
		\Delta_\lambda(F)=\sum \limits_{\substack{V(F)=I\cup J,\\ I\cap J\subseteq V_X(F)}}\lambda^{|I\cap J|} F_{I} \ot F_{J},
		\mlabel{eq:comcoproduct1}
	\end{align}
where $V_X(F)$ is the set of vertices of $F$ which are decorated by an element of $X$.
In particular, for $\lambda=0$,
\begin{align}
	\Delta(F)=\sum \limits_{I \subseteq V(F)} F_{I} \ot F_{V(F)\backslash I}.
	\mlabel{eq:comcoproduct}
\end{align}
\end{theorem}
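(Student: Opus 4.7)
The plan is to verify equation~(\meqref{eq:comcoproduct1}) by induction, mirroring the recursive construction of $\coll$ in (\meqref{eq:dele})--(\meqref{eq:delee1}): an outer induction on $\dep(F)$, and for $\dep(F) \geq 1$, a secondary induction on $\bre(F)$. Denote the right-hand side of~(\meqref{eq:comcoproduct1}) by $\widetilde{\coll}(F)$; it suffices to show $\widetilde{\coll}(F) = \coll(F)$ for every $F \in \rfs$.

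For depth $0$, the cases $F = \etree$ and $F = \bullet_x$ with $x \in X$ are immediate from the three admissible decompositions $\{(\{v\},\emptyset),(\emptyset,\{v\}),(\{v\},\{v\})\}$ on a single $X$-decorated vertex, exactly matching~(\meqref{eq:dele}). For any concatenation $F = F_1 F_2$ (covering both the remaining depth-$0$ case $\bullet_{x_1}\cdots\bullet_{x_m}$ and the breadth-reduction step~(\meqref{eq:delee1})), the disjoint splitting $V(F) = V(F_1) \sqcup V(F_2)$ gives a bijection between admissible pairs $(I,J)$ for $V(F)$ and ordered pairs of admissible pairs for $V(F_1)$ and $V(F_2)$; combined with the evident factorizations $F_{I} = (F_1)_{I \cap V(F_1)}\,(F_2)_{I \cap V(F_2)}$ and $V_X(F) = V_X(F_1) \sqcup V_X(F_2)$, this yields $\widetilde{\coll}(F_1 F_2) = \widetilde{\coll}(F_1)\widetilde{\coll}(F_2)$, which matches Lemma~\mref{lem:colmor} and~(\meqref{eq:delee1}).

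The crux is the grafting step $F = B_\omega^+(\lbar{F})$ with $\dep(F) \geq 1$ and $\bre(F) = 1$. Let $v_0$ be the new root, decorated by $\omega \in \Omega$; then $v_0 \notin V_X(F)$ while $V_X(F) = V_X(\lbar{F})$, so in any admissible decomposition $V(F) = I \cup J$ the vertex $v_0$ lies in exactly one of $I, J$. A direct unpacking of the induced-subforest construction shows that if $v_0 \in I$ then $F_{I} = B_\omega^+(\lbar{F}_{I \setminus \{v_0\}})$, since every $v \in I \setminus \{v_0\}$ without an $I$-ancestor in $\lbar{F}$ becomes a child of $v_0$, and every other $v$ keeps the same smallest $I$-ancestor as in $\lbar{F}_{I \setminus \{v_0\}}$; while if $v_0 \notin I$ then $F_{I} = \lbar{F}_{I}$, because removing $v_0$ simply promotes the surviving children of $v_0$ to roots. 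Splitting $\widetilde{\coll}(F)$ according to which side contains $v_0$ and applying the induction hypothesis $\widetilde{\coll}(\lbar{F}) = \coll(\lbar{F})$ produces $(B_\omega^+ \otimes \id + \id \otimes B_\omega^+)\coll(\lbar{F}) = \coll(F)$ by~(\meqref{eq:cdbp}), closing the induction.

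The main obstacle is precisely this last verification of the two formulas for $F_{I}$ in the grafting case: the defining rule of the induced subforest through the ``smallest ancestor in $I$'' must be reconciled carefully with the operation of adjoining a new root $v_0$, and the planar order of the resulting trees must be checked to coincide with the planar structure prescribed by $B_\omega^+$. Once~(\meqref{eq:comcoproduct1}) is established, the special case~(\meqref{eq:comcoproduct}) is immediate: at $\lambda = 0$ the factor $\lambda^{|I\cap J|}$ annihilates every term with $I \cap J \neq \emptyset$, leaving only the decompositions with $J = V(F) \setminus I$.
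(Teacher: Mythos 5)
Your proposal is correct and follows essentially the same route as the paper: both arguments verify that the combinatorial right-hand side satisfies the defining recursion of $\coll$ --- the base cases for $\etree$ and $\bullet_x$, multiplicativity under concatenation via the splitting $V(F_1F_2)=V(F_1)\sqcup V(F_2)$, and the symmetric cocycle identity by sorting decompositions according to which side contains the new root (which lies in exactly one of $I,J$ since it is $\Omega$-decorated). The only cosmetic difference is that you frame this as an explicit double induction on depth and breadth and spell out the identities $F_I=B_\omega^+(\lbar{F}_{I\setminus\{v_0\}})$ and $F_I=\lbar{F}_I$, which the paper uses implicitly in its re-indexing step.
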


\begin{proof}
	Denote the righthand of Eq.~(\mref{eq:comcoproduct1}) by $C_\lambda(F)$. First we show that $C_\lambda$ is an algebra homomorphism. Suppose $F=F_1F_2$, where $F_1,F_2$ are non-trivial decorated planar rooted forests. Then
	
\begin{align*}
C_\lambda(F_1)C_\lambda(F_2)&=\sum_{\substack{V(F_1)=I_1\cup J_1,\\V(F_2)=I_2\cup J_2,\\ I_1\cap J_1\subseteq V_X(F_1),\\ I_2\cap J_2\subseteq V_X(F_2)}}
\lambda^{|I_1\cap J_1|+|I_2\cap J_2|} {F_1}_{I_1}{F_2}_{I_2}\otimes {F_1}_{J_1}{F_2}_{J_2}\\
&=\sum_{\substack{V(F_1)=I_1\cup J_1,\\V(F_2)=I_2\cup J_2,\\ I_1\cap J_1\subseteq V_X(F_1),\\ I_2\cap J_2\subseteq V_X(F_2)}}
\lambda^{|(I_1\cup I_2)\cap (J_1\cup J_2)|}
(F_1F_2)_{I_1\cup I_2}\otimes (F_1F_2)_{J_1\cup J_2}\\
&=\sum_{\substack{V(F)=I\cup J,\\ I\cap J\subseteq V_X(F_1F_2)}}
(F_1F_2)_I\otimes (F_1F_2)_J\\
&=C_\lambda(F_1F_2),
\end{align*}
where $I=I_1\cup I_2$ and $J=J_1\cup J_2$, as $V_X(F_1F_2)=V_X(F_1)\cup V_X(F_2)$.	So $C_\lambda$ is an algebra morphism.
Moreover, $C_{\lambda} (\etree)= \etree \ot \etree$ and for $x\in X$,
\[C_\lambda(\tdun{$x$})=\tdun{$x$}\otimes 1+1\otimes \tdun{$x$}+\lambda \tdun{$x$}\otimes \tdun{$x$}.\]
Let $G\in \rfs$ and $\omega \in \Omega$. We put $F=B_\omega^+(G)$
and we denote by $r$ the root of $F$. Note that $r\notin V_X(F)$, and that $V_X(F)=V_X(G)$. Therefore,
\begin{align*}
C_\lambda(F)&=\sum_{\substack{V(F)=I\cup J,\\ I\cap J\subseteq V_X(G),\\ r\in I}}\lambda^{|I\cap J|}B_\omega^+(G)_I\otimes G_J
+\sum_{\substack{V(F)=I\cup J,\\ I\cap J\subseteq V_X(G),\\ r\in J}}\lambda^{|I\cap J|}G_I\otimes B_\omega^+(G)_J\\
&=\sum_{\substack{V(G)=I\cup J,\\ I\cap J\subseteq V_X(G)}}\lambda^{|I\cap J|}B_\omega^+(G_I)\otimes G_J
+\sum_{\substack{V(G)=I\cup J,\\ I\cap J\subseteq V_X(G),\\ r\in J}}\lambda^{|I\cap J|}G_I\otimes B_\omega^+(G_J)\\
&=(\id \otimes B_\omega^++B_\omega^+\otimes \id) C_\lambda(G),
\end{align*}
so $C_\lambda$ satisfies (\ref{eq:dele}), (\ref{eq:dbp}), (\ref{eq:cdbp}) and (\ref{eq:delee1}): this implies that $C_\lambda=\Delta_\lambda$.
\end{proof}

\begin{exam}
By Theorem~\mref{thm:comcoproduct}, we have
\begin{align*}
C_{\lambda}(\tdun{$x$}\tddeux{$\alpha$}{$y$})=&\ \tdun{$x$}\tddeux{$\alpha$}{$y$} \ot \etree+ \tddeux{$\alpha$}{$y$} \ot \tdun{$x$}
+\tdun{$x$}\tdun{$\alpha$} \ot \tdun{$y$}+\tdun{$\alpha$} \ot \tdun{$x$}\tdun{$y$} +\tdun{$x$}\tdun{$y$}\ot \tdun{$\alpha$}+ \tdun{$y$} \ot \tdun{$x$}\tdun{$\alpha$}\\
&\ +\tdun{$x$} \ot \tddeux{$\alpha$}{$y$}+ \etree \ot \tdun{$x$}\tddeux{$\alpha$}{$y$}+ \lambda \tdun{$x$}\tddeux{$\alpha$}{$y$} \ot \tdun{$x$}+ \lambda \tdun{$x$}\tdun{$\alpha$} \ot \tdun{$x$}\tdun{$y$}+\lambda \tddeux{$\alpha$}{$y$} \ot \tdun{$x$} \tdun{$y$}+\lambda \tdun{$x$}\tddeux{$\alpha$}{$y$} \ot \tdun{$y$}\\
&\ +\lambda^2 \tdun{$x$}\tddeux{$\alpha$}{$y$} \ot \tdun{$x$}\tdun{$y$}+ \lambda \tdun{$x$} \tdun{$y$} \ot \tdun{$x$} \tdun{$\alpha$}
+\lambda \tdun{$x$} \ot \tdun{$x$}\tddeux{$\alpha$}{$y$}+ \lambda \tdun{$x$} \tdun{$y$} \ot \tddeux{$\alpha$}{$y$}+ \lambda \tdun{$y$} \ot \tdun{$x$}\tddeux{$\alpha$}{$y$}+\lambda^2 \tdun{$x$}\tdun{$y$} \ot \tdun{$x$}\tddeux{$\alpha$}{$y$} ,\\
C_{\lambda}( \tdtroisun{$\alpha$}{$\beta$}{$x$})=&\ \tdtroisun{$\alpha$}{$\beta$}{$x$} \ot \etree+ \tdun{$x$}\tdun{$\beta$} \ot \tdun{$\alpha$}+\tddeux{$\alpha$}{$\beta$} \ot \tdun{$x$}+\tdun{$\beta$} \ot \tddeux{$\alpha$}{$x$}+\tddeux{$\alpha$}{$x$} \ot \tdun{$\beta$}+\tdun{$x$} \ot \tddeux{$\alpha$}{$\beta$}+ \tdun{$\alpha$} \ot \tdun{$x$}\tdun{$\beta$}+ \etree \ot \tdtroisun{$\alpha$}{$\beta$}{$x$}\\
&\ +\lambda \tdtroisun{$\alpha$}{$\beta$}{$x$} \ot \tdun{$x$}+ \lambda \tdun{$x$} \tdun{$\beta$} \ot \tddeux{$\alpha$}{$x$}+ \lambda \tddeux{$\alpha$}{$x$} \ot \tdun{$x$}\tdun{$\beta$}+ \lambda \tdun{$x$} \ot \tdtroisun{$\alpha$}{$\beta$}{$x$},
\end{align*}
which are consistent with $\coll(\tdun{$x$}\tddeux{$\alpha$}{$y$})$ and $\coll( \tdtroisun{$\alpha$}{$\beta$}{$x$})$ in Example~\mref{exam:cop}.
\end{exam}

\subsection{Dual of the coproduct $\Delta$}

We identify $\hrts$ with its graded dual through the symmetric bilinear form defined by
\begin{align*}
&\forall F,G\in \rfs, \quad \langle F,G\rangle=\delta_{F,G}.
\end{align*}
As the coproduct $\Delta=\Delta_0$ is homogeneous, it induces a product $\star$ on $\hrts$, such that
\begin{align*}
&\forall x,y,z\in \hrts,&\langle x\otimes y,\Delta(z)\rangle&=\langle x\star y,z\rangle.
\end{align*}
For any $F,G,H\in \rfs$, we put
\[N(F,G;H)=\{I\subseteq V(H)\mid H_{I}=F,\: H_{ V(H)\setminus I}=G\}.\]
Then, by Theorem \ref{thm:comcoproduct}, for any $H\in \rfs$,
\[\Delta(H)= \sum_{F,G\in \rfs} |N(F,G;H)| F\otimes G.\]
Therefore, for any $F,G\in \rfs$,
\[F\star G=\sum_{H\in \rfs} |N(F,G;H)| H.\]
In order to describe these sets $N(F,G;H)$, we introduce an alternative description of planar rooted forests. If $F\in \rfs$, recall that its vertices are given two partial orders $\leq_r$ and $\leq_h$: if $x,y\in V(F)$, $x\leq_h y$ if there exists an oriented path from $x$ to $y$ in $F$, the edges of $F$ being oriented from the roots to the leaves; $x\leq_r y$ if $x$ and $y$ are not comparable for $\leq_h$ and if in the plane representation of $F$,
$y$ is more on the right than $x$. Then:
\begin{itemize}
\item For any $x,y\in V(F)$, if $x$ and $y$ are comparable for both $\leq_h$ and $\leq_r$, then $x=y$.
\item $\leq_{h,r}=\leq_h\cup \leq_r$ is a total order on $V(F)$.
\end{itemize}
Therefore, if $|V(F)|=n$, its vertices are ordered by $\leq_{h,r}$:
\[v_1\leq_{h,r}\cdots \leq_{h,r} v_n.\]
For example, we represent this total order on forests with 4 vertices by indices:
\begin{align*}
&\tdun{$1$}\tdun{$2$}\tdun{$3$}\tdun{$4$},&&\tdun{$1$}\tdun{$2$}\tddeux{$3$}{$4$},&&\tdun{$1$}\tddeux{$2$}{$3$}\tdun{$4$},&&\tddeux{$1$}{$2$}\tdun{$3$}\tdun{$4$},&&\tddeux{$1$}{$2$}\tddeux{$3$}{$4$},&&\tdun{$1$}\tdtroisun{$2$}{$4$}{$3$},&&\tdtroisun{$1$}{$3$}{$2$}\tdun{$4$},\\
&\tdun{$1$}\tdtroisdeux{$2$}{$3$}{$4$}&&\tdtroisdeux{$1$}{$2$}{$3$}\tdun{$4$},&&\tdquatreun{$1$}{$4$}{$3$}{$2$},&&\tdquatredeux{$1$}{$4$}{$2$}{$3$},&&\tdquatretrois{$1$}{$3$}{$4$}{$2$},&&\tdquatrequatre{$1$}{$2$}{$4$}{$3$},&&\tdquatrecinq{$1$}{$2$}{$3$}{$4$}.
\end{align*}

We then represent $F\in \rfs$ by a matrix $M(F)$ of size $n\times(n+1)$. The rows are indexed from $1$ to $n$, and the columns are indexed from $0$ to $n$. Then:
\begin{enumerate}
\item If $1\leq i\leq n$, $M(F)_{i,0}$ is the decoration of the vertex $v_i$.
\item If $i>j$, $M(F)_{i,j}=0$.
\item If $i=j$, $M(F)_{i,j}$ is the symbol $=$.
\item If $i<j$ and $v_i\leq_h v_j$, then $M(F)_{i,j}$ is the symbol $h$.
\item  If $i<j$ and $v_i\leq_r v_j$, then $M(F)_{i,j}$ is the symbol $r$.
\end{enumerate}
Here are examples of matrices $M(F)$: here, $x,y\in X$, $\alpha,\beta,\gamma \in \Omega$.
\begin{align*}
M(\tdun{$x$})&=\begin{pmatrix}
x&=
\end{pmatrix},\\
M(\tddeux{$\alpha$}{$x$})&=\begin{pmatrix}
\alpha&=&h\\
x&0&=
\end{pmatrix},&
M(\tdun{$\alpha$}\tdun{$x$})&=\begin{pmatrix}
\alpha&=&r\\
x&0&=
\end{pmatrix},\\
M(\tdtroisun{$\alpha$}{$y$}{$x$})&=\begin{pmatrix}
\alpha&=&h&h\\
x&0&=&r\\
y&0&0&=
\end{pmatrix},&
M(\tdtroisdeux{$\alpha$}{$\beta$}{$x$})&=\begin{pmatrix}
\alpha&=&h&h\\
\beta&0&=&h\\
x&0&0&=
\end{pmatrix}.
\end{align*}
This leads to the following definition:

\begin{defn}
Let $A$ be a matrix with $n$ rows and $n+1$ columns. We shall say that it is {\bf forest-representable} if:
\begin{enumerate}
\item If $1\leq i\leq n$, $A_{i,0}\in X\cup \Omega$.
\item If $1\leq j<i \leq n$, $A_{i,j}=0$.
\item If $1\leq i\leq n$, $A_{i,i}$ is the symbol $=$.
\item If $1\leq i<j \leq n$, $A_{i,j}\in \{h,r\}$.
\item If $1\leq i\leq n$ and $A_{i,0}\in X$, then for any $j>i$, $A_{i,j}=r$.
\item If $1\leq i<j<k\leq n$, then
\begin{align*}
\begin{pmatrix}
A_{i,j}&A_{i,k}\\
A_{j,j}&A_{j,k}
\end{pmatrix}\in
\left\{\begin{pmatrix}
r&r\\
=&r
\end{pmatrix},
\begin{pmatrix}
h&h\\
=&h
\end{pmatrix},\begin{pmatrix}
r&r\\
=&h
\end{pmatrix},\begin{pmatrix}
h&h\\
=&r
\end{pmatrix},\begin{pmatrix}
h&r\\
=&r
\end{pmatrix}
\right\}.
\end{align*}
\end{enumerate}
The set of forest-representable matrices is denoted by $\mathcal{FM}(X,\Omega)$.
\end{defn}

\begin{prop}
The correspondence $F\longrightarrow M(F)$ is a bijection from $\rfs$ to $\mathcal{FM}(X,\Omega)$.
\end{prop}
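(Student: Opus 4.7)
The plan is to establish three things: that the map $F \mapsto M(F)$ is well-defined (i.e., $M(F) \in \mathcal{FM}(X,\Omega)$), that it is injective, and that it is surjective.

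For well-definedness, conditions (1)--(4) in the definition of $\mathcal{FM}(X,\Omega)$ are immediate from the construction of $M(F)$. Condition (5) follows from the fact that a vertex decorated by an element of $X$ is necessarily a leaf of $F$, hence has no descendants, and therefore cannot be in $\leq_h$-relation with any later vertex. For condition (6), I would perform a case analysis on how three vertices $v_i <_{h,r} v_j <_{h,r} v_k$ may be mutually related in a depth-first traversal of a planar rooted forest: among the eight a priori possibilities for $(A_{i,j}, A_{j,k}, A_{i,k}) \in \{h,r\}^3$, the three patterns $(h,h,r)$, $(r,h,h)$, and $(r,r,h)$ are excluded (respectively, by transitivity of $\leq_h$, by the fact that two ancestors of a common vertex must be $\leq_h$-comparable, and by the contiguity of subtrees in DFS order), leaving exactly the five configurations of condition (6). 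Injectivity is then clear, since $M(F)$ records all decorations together with all $\leq_h$- and $\leq_r$-comparisons, and a decorated planar rooted forest is uniquely determined by this data.

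Surjectivity I would prove by induction on $n$. The base case $n=0$ gives $F = \etree$. For $n \geq 1$, let $A'$ be the matrix obtained from $A$ by deleting the last row and last column; forest-representability is inherited directly from $A$, so by the inductive hypothesis $A' = M(F')$ for a unique $F' \in \rfs$. I would then insert a new vertex $v_n$ with decoration $A_{n,0}$ as follows: if $A_{i,n} = r$ for every $i < n$, set $F := F'\, T_n$ with $T_n := \bullet_{A_{n,0}}$ when $A_{n,0} \in X$ and $T_n := B^+_{A_{n,0}}(\etree)$ when $A_{n,0} \in \Omega$; otherwise, let $j$ be the maximal index $< n$ with $A_{j,n} = h$ (so $A_{j,0} \in \Omega$ by condition (5)), and attach $v_n$ as the rightmost child of $v_j$ in $F'$.

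The verification that $M(F) = A$ reduces to checking the last column of the matrix and again rests on condition (6). For $i < j$, the only admissible patterns $(A_{i,j}, A_{j,n}, A_{i,n})$ with $A_{j,n} = h$ are $(h,h,h)$ and $(r,h,r)$, which forces $A_{i,j} = A_{i,n}$ and matches the ancestry relation between $v_i$ and $v_n$ in the constructed $F$. For $j < i < n$, the admissible patterns $(A_{j,i}, A_{i,n}, A_{j,n})$ with $A_{j,n} = h$ are $(h,h,h)$ and $(h,r,h)$, forcing $A_{j,i} = h$; combined with the maximality of $j$ (which gives $A_{i,n} = r$), this selects exactly the pattern $(h,r,h)$, which is precisely what our construction produces: $v_i$ is a descendant of $v_j$ in $F'$ and $v_n$ is the newly-added rightmost child of $v_j$. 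The main obstacle is this systematic use of condition (6): both the forward direction and the surjectivity step rest on the observation that the five admissible $2 \times 2$ patterns correspond exactly to the realizable DFS configurations, and it is this tight correspondence that makes $\mathcal{FM}(X,\Omega)$ the right target object.
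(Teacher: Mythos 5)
Your proof is correct, and the first two thirds of it (well-definedness and injectivity) follow the same route as the paper: you exclude the same three forbidden $2\times 2$ patterns in condition (f) --- via transitivity of $\leq_h$, the incompatibility of simultaneous $\leq_h$- and $\leq_r$-comparability, and planarity --- and read injectivity off the fact that the matrix records both orders together with the decorations. Where you genuinely diverge is surjectivity. The paper reconstructs the forest globally: from a matrix $A$ it defines two relations $\leq_h$ and $\leq_r$ on $\{1,\ldots,n\}$, uses the forbidden patterns to show that each is a partial order, takes the Hasse diagram of $\leq_h$ as the underlying forest, and lets $\leq_r$ supply the planar embedding. You instead proceed by induction on $n$, deleting the last row and column and re-inserting the last vertex either as a new rightmost one-vertex tree or as the rightmost child of the maximal $j<n$ with $A_{j,n}=h$. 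Your route needs one extra fact that the global construction does not, namely that every vertex $v_i$ with $j<i<n$ is a descendant of $v_j$, so that the subtree of $v_j$ is a suffix of the preorder and the inserted vertex really lands in position $n$ of the total order $\leq_{h,r}$; you do derive this from the admissible patterns $(h,h,h)$ and $(h,r,h)$, but it deserves to be stated explicitly as the reason the labelling of the first $n-1$ vertices is undisturbed. In exchange, your argument is more algorithmic: it produces an explicit insertion procedure for building $M^{-1}(A)$ row by row, whereas the paper's construction is shorter and exhibits the inverse map in a single step.
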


\begin{proof}
Let $F\in \rfs$, let us prove that $M(F)\in \mathcal{FM}(X,\Omega)$.
Points (a)-(d) are obvious. If $M(F)_{i,0}\in X$, then necessarily $v_i$ is a leaf, so if $j>i$, $v_i\leq_h v_j$ is not possible.
Therefore, $v_i\leq_r v_j$, and $M(F)_{i,j}=r$. This proves (e).
Let us now prove (f). Let $1\leq i<j<k\leq n$.
If $v_i\leq_h v_j$ and $v_j\leq_h v_j$, then, as $\leq_h$ is an order, $v_i\leq_h v_k$. Consequently,
\[\begin{pmatrix}
	M(F)_{i,j}&M(F)_{i,k}\\
	M(F)_{j,j}&m_{j,k}
\end{pmatrix}\neq \begin{pmatrix}
	h&r\\
	=&h
\end{pmatrix}.\]
Similarly, as $\leq_r$ is an order,
\[\begin{pmatrix}
	M(F)_{i,j}&M_(F)_{i,k}\\
	M(F)_{j,j}&M(F)_{j,k}
\end{pmatrix}\neq \begin{pmatrix}
	r&h\\
	=&r
\end{pmatrix}.\]
If $v_i\neq_r v_j$ and $v_j \leq_h v_k$, as $F$ is a plane forest, then necessarily $v_i \leq_r v_k$, therefore,
\[\begin{pmatrix}
	M(F)_{i,j}&M(F)_{i,k}\\
	M(F)_{j,j}&M(F)_{j,k}
\end{pmatrix}\neq \begin{pmatrix}
	r&h\\
	=&h
\end{pmatrix}.\]
This finally proves (f). Therefore, the map $M:\rfs \longrightarrow \mathcal{MF}(X,\Omega)$ is well-defined.
Note that $M(F)$ allows to obtain $\leq_h$ and $\leq_r$, after identification of $V(F)$ with $\{1,\ldots,n\}$. So $M$ is injective.
Let $A\in \mathcal{FM}(X,\Omega)$. We denote by $n$ the number of rows of $A$. We define two relations $\leq_h$ and $\leq_r$ on $\{1,\ldots,n\}$ by
\begin{align*}
&\forall i,j\in \{1,\ldots,n\},& i\leq_h j\mbox{ if }A_{i,j}\in \{=,h\},\\
&&i\leq_r j\mbox{ if }A_{i,j}\in \{=,r\}.
\end{align*}
If $1\leq i<j<k\leq n$,
\[\begin{pmatrix}
	A_{i,j}&A_{i,k}\\
	A_{j,j}&A_{j,k}
\end{pmatrix}\neq \begin{pmatrix}
	h&r\\
	=&h
\end{pmatrix},\]
so if $A_{i,j}=h$ and $A_{j,k}=h$, then $A_{i,k}=h$,
so $\leq_h$ is an order. Similarly, $\leq_r$ is an order. Let $F$ be the Hasse graph of $\leq_h$. If $1\leq i<j<k\leq n$,
\[\begin{pmatrix}
	A_{i,j}&A_{i,k}\\
	A_{j,j}&A_{j,k}
\end{pmatrix}\neq \begin{pmatrix}
	r&h\\
	=&h
\end{pmatrix},\]
so $F$ is a rooted forest. The relation $\leq_r$ induces a plane structure on $F$, so $F$ is a planar forest. We decorate the vertex $i$ of $F$ by $A_{i,0}$.
By condition (e), if $i$ is decorated by $x\in X$, then $i$ is a leaf of $F$, so $F\in \rfs$. By construction, $M(F)=A$. So $A$ is surjective.
\end{proof}

\begin{defn}
Let $k,l\in \mathbb{N}$.
\begin{enumerate}
\item Let $\sigma \in \mathfrak{S}_{k+l}$ be a permutation. Then $\sigma$ is a {\bf $(k,l)$-shuffle} if
\begin{align*}
&\sigma(1)<\cdots<\sigma(k),&&\sigma(k+1)<\cdots<\sigma(k+l).
\end{align*}
The set of $(k,l)$-shuffles is denoted by $\sh(k,l)$.
\item Let $A,B\in \mathcal{FM}(X,\Omega)$, with respectively $k$ and $l$ columns, and $\sigma\in \sh(k,l)$. We denote by $\mathcal{FM}_\sigma(A,B)$ the set of forest-representable matrices $C$ with $k+l$ rows such that:
\begin{itemize}
\item If $1\leq i\leq k$, $C_{\sigma(i),0}=A_{i,0}$.
\item If $k+1\leq i\leq k+l$, $C_{\sigma(i),0}=B_{i-k,0}$.
\item If $1\leq i,j\leq k$, $C_{\sigma(i),\sigma(j)}=A_{i,j}$.
\item If $k+1\leq i,j\leq k+l$, $C_{\sigma(i),\sigma(j)}=B_{i-k,j-k}$.
\end{itemize}
\end{enumerate}
\end{defn}

\begin{exam}\label{examproduct1}
Let $\alpha,\beta,\gamma \in \Omega$.
We consider
\begin{align*}
F&=\tdun{$\alpha$}, &G&=\tddeux{$\beta$}{$\gamma$},\\
A=M(F)&=\begin{pmatrix}
\alpha&=
\end{pmatrix},&B=M(G)&=\begin{pmatrix}
\beta&=&h\\
\gamma&0&=
\end{pmatrix}.
\end{align*}
There are three $(1,2)$-shuffles.
\begin{enumerate}
\item For $\sigma=(123)$, we look for forest-representable matrices of the form
\[\begin{pmatrix}
\alpha&=&\mbox{$h$ or $r$}&\mbox{$h$ or $r$}\\
\beta&0&=&h\\
\gamma&0&0&=
\end{pmatrix}.\]
There are two of them:
\[\mathcal{FM}_\sigma(A,B)
=\left\{
\begin{pmatrix}
	\alpha&=&h&h\\
	\beta&0&=&h\\
	\gamma&0&0&=
\end{pmatrix},
\begin{pmatrix}
	\alpha&=&r&h\\
	\beta&0&=&h\\
	\gamma&0&0&=
\end{pmatrix}
\right\}=\left\{M(\tdtroisdeux{$\alpha$}{$\beta$}{$\gamma$}),M(\tdun{$\alpha$}\tddeux{$\beta$}{$\gamma$})\right\}.\]
\item For $\sigma=(213)$, we look for forest-representable matrices of the form
\[\begin{pmatrix}
	\beta&=&\mbox{$h$ or $r$}&h\\
	\alpha&0&=&\mbox{$h$ or $r$}\\
	\gamma&0&0&=
\end{pmatrix}.\]
There are two of them:
\[\mathcal{FM}_\sigma(A,B)
=\left\{
\begin{pmatrix}
	\beta&=&h&h\\
	\alpha&0&=&h\\
	\gamma&0&0&=
\end{pmatrix},
\begin{pmatrix}
	\beta&=&h&h\\
	\alpha&0&=&r\\
	\gamma&0&0&=
\end{pmatrix}
\right\}=\left\{M(\tdtroisdeux{$\beta$}{$\alpha$}{$\gamma$}),M(\tdtroisun{$\beta$}{$\gamma$}{$\alpha$})\right\}.\]
\item For $\sigma=(312)$, we look for forest-representable matrices of the form
\[\begin{pmatrix}
	\beta&=&h&\mbox{$h$ or $r$}\\
	\gamma&0&=&\mbox{$h$ or $r$}\\
	\alpha&0&0&=
\end{pmatrix}.\]
There are three of them:
\[\mathcal{FM}_\sigma(A,B)
=\left\{
\begin{pmatrix}
	\beta&=&h&h\\
	\gamma&0&=&h\\
	\alpha&0&0&=
\end{pmatrix},\begin{pmatrix}
\beta&=&h&h\\
\gamma&0&=&r\\
\alpha&0&0&=
\end{pmatrix},
\begin{pmatrix}
	\beta&=&h&r\\
	\gamma&0&=&r\\
	\alpha&0&0&=
\end{pmatrix}
\right\}=\left\{M(\tdtroisdeux{$\beta$}{$\gamma$}{$\alpha$}),M(\tdtroisun{$\beta$}{$\alpha$}{$\gamma$}),
M(\tddeux{$\beta$}{$\gamma$}\tdun{$\alpha$})\right\}.\]
\end{enumerate}
\end{exam}

\begin{exam}\label{examproduct2}
	Let $x\in X$ and $\beta,\gamma \in \Omega$.
	We consider
	\begin{align*}
		F&=\tdun{$x$}, &G&=\tddeux{$\beta$}{$\gamma$},\\
		A=M(F)&=\begin{pmatrix}
			x&=
		\end{pmatrix},&B=M(G)&=\begin{pmatrix}
			\beta&=&h\\
			\gamma&0&=
		\end{pmatrix}.
	\end{align*}
	There are three $(1,2)$-shuffles.
	\begin{enumerate}
		\item For $\sigma=(123)$, we look for forest-representable matrices of the form
		\[\begin{pmatrix}
			x&=&\mbox{$h$ or $r$}&\mbox{$h$ or $r$}\\
			\beta&0&=&h\\
			\gamma&0&0&=
		\end{pmatrix}.\]
		There is only one:
		\[\mathcal{FM}_\sigma(A,B)
		=\left\{
			\begin{pmatrix}
			x&=&r&r\\
			\beta&0&=&h\\
			\gamma&0&0&=
		\end{pmatrix}
		\right\}=\left\{M(\tdun{$x$}\tddeux{$\beta$}{$\gamma$})\right\}.\]
		\item For $\sigma=(213)$, we look for forest-representable matrices of the form
		\[\begin{pmatrix}
			\beta&=&\mbox{$h$ or $r$}&h\\
			x&0&=&\mbox{$h$ or $r$}\\
			\gamma&0&0&=
		\end{pmatrix}.\]
		There is only one:
		\[\mathcal{FM}_\sigma(A,B)
		=\left\{
		\begin{pmatrix}
			\beta&=&h&h\\
			x&0&=&r\\
			\gamma&0&0&=
		\end{pmatrix}
		\right\}=\left\{M(\tdtroisun{$\beta$}{$\gamma$}{$x$})\right\}.\]
		\item For $\sigma=(311)$, we look for forest-representable matrices of the form
		\[\begin{pmatrix}
			\beta&=&h&\mbox{$h$ or $r$}\\
			\gamma&0&=&\mbox{$h$ or $r$}\\
			x&0&0&=
		\end{pmatrix}.\]
		There are three of them:
		\[\mathcal{FM}_\sigma(A,B)
		=\left\{
		\begin{pmatrix}
			\beta&=&h&h\\
			\gamma&0&=&h\\
			x&0&0&=
		\end{pmatrix},\begin{pmatrix}
			\beta&=&h&h\\
			\gamma&0&=&r\\
			x&0&0&=
		\end{pmatrix},
		\begin{pmatrix}
			\beta&=&h&r\\
			\gamma&0&=&r\\
			x&0&0&=
		\end{pmatrix}
		\right\}=\left\{M(\tdtroisdeux{$\beta$}{$\gamma$}{$x$}),M(\tdtroisun{$\beta$}{$x$}{$\gamma$}),
		M(\tddeux{$\beta$}{$\gamma$}\tdun{$x$})\right\}.\]
	\end{enumerate}
\end{exam}

\begin{exam}\label{examproduct3}
Let $x\in X$ and $\alpha,\beta,\gamma \in \Omega$. We consider
\begin{align*}
F&=\tdun{$\alpha$}\tdun{$x$},&G&=\tddeux{$\beta$}{$\gamma$},\\
A=M(F)&=\begin{pmatrix}
\alpha&=&h\\
x&0&=
\end{pmatrix},&
B=M(G)&=\begin{pmatrix}
\beta&=&h\\
\gamma&0&=
\end{pmatrix}.
\end{align*}
There are six $(2,2)$-shuffles:
\begin{align*}
\mathcal{FM}_{(1234)}(A,B)&=\left\{
\begin{pmatrix}
\alpha&=&r&\textcolor{red}{r}&\textcolor{red}{r}\\
x&0&=&\textcolor{red}{r}&\textcolor{red}{r}\\
\beta&0&0&=&h\\
\gamma&0&0&0&=
\end{pmatrix}
\right\}=\left\{M(\tdun{$\alpha$}\tdun{$x$}\tddeux{$\beta$}{$\gamma$})\right\},\\
\mathcal{FM}_{(1324)}(A,B)&=\left\{
\begin{pmatrix}
\alpha&=&\textcolor{red}{r}&r&\textcolor{red}{r}\\
\beta&0&=&\textcolor{red}{h}&h\\
x&0&0&=&\textcolor{red}{r}\\
\gamma&0&0&0&=
\end{pmatrix}
\right\}=\left\{M(\tdun{$\alpha$} \tdtroisun{$\beta$}{$\gamma$}{$x$})\right\},\\
\mathcal{FM}_{(1423)}(A,B)&=\left\{
\begin{pmatrix}
\alpha&=&\textcolor{red}{h}&\textcolor{red}{h}&r\\
\beta&0&=&h&\textcolor{red}{r}\\
\gamma&0&0&=&\textcolor{red}{r}\\
x&0&0&0&=
\end{pmatrix},\begin{pmatrix}
\alpha&=&\textcolor{red}{r}&\textcolor{red}{r}&r\\
\beta&0&=&h&\textcolor{red}{r}\\
\gamma&0&0&=&\textcolor{red}{r}\\
x&0&0&0&=
\end{pmatrix},\begin{pmatrix}
\alpha&=&\textcolor{red}{r}&\textcolor{red}{r}&r\\
\beta&0&=&h&\textcolor{red}{h}\\
\gamma&0&0&=&\textcolor{red}{r}\\
x&0&0&0&=
\end{pmatrix},\begin{pmatrix}
\alpha&=&\textcolor{red}{r}&\textcolor{red}{r}&r\\
\beta&0&=&h&\textcolor{red}{h}\\
\gamma&0&0&=&\textcolor{red}{h}\\
x&0&0&0&=
\end{pmatrix}
\right\}\\
&=\left\{M(\tdtroisdeux{$\alpha$}{$\beta$}{$\gamma$}\tdun{$x$}),M(\tdun{$\alpha$}\tddeux{$\beta$}{$\gamma$}\tdun{$x$}),M(\tdun{$\alpha$}\tdtroisun{$\beta$}{$x$}{$\gamma$}),M(\tdun{$\alpha$} \tdtroisdeux{$\beta$}{$\gamma$}{$x$})\right\},\\
\mathcal{FM}_{(2314)}(A,B)&=\left\{
\begin{pmatrix}
\beta&=&\textcolor{red}{h}&\textcolor{red}{h}&h\\
\alpha&0&=&r&\textcolor{red}{r}\\
x&0&0&=&\textcolor{red}{r}\\
\gamma&0&0&0&=
\end{pmatrix}
\right\}=\left\{M(\tdquatreun{$\beta$}{$\gamma$}{$x$}{$\alpha$})\right\},\\
\mathcal{FM}_{(2413)}(A,B)&=\left\{
\begin{pmatrix}
\beta&=&\textcolor{red}h{}&h&\textcolor{red}{r}\\
\alpha&0&=&\textcolor{red}{h}&r\\
\gamma&0&0&=&\textcolor{red}{r}\\
x&0&0&0&=
\end{pmatrix},\begin{pmatrix}
\beta&=&\textcolor{red}{h}&h&\textcolor{red}{h}\\
\alpha&0&=&\textcolor{red}{h}&r\\
\gamma&0&0&=&\textcolor{red}{r}\\
x&0&0&0&=
\end{pmatrix},\begin{pmatrix}
\beta&=&\textcolor{red}{h}&h&\textcolor{red}{r}\\
\alpha&0&=&\textcolor{red}{r}&r\\
\gamma&0&0&=&\textcolor{red}{r}\\
x&0&0&0&=
\end{pmatrix},\begin{pmatrix}
\beta&=&\textcolor{red}{h}&h&\textcolor{red}{h}\\
\alpha&0&=&\textcolor{red}{r}&r\\
\gamma&0&0&=&\textcolor{red}{r}\\
x&0&0&0&=
\end{pmatrix}
\right\}\\
&=\left\{
M(\tdtroisdeux{$\beta$}{$\alpha$}{$\gamma$}\tdun{$x$}),
M(\tdquatretrois{$\beta$}{$\gamma$}{$x$}{$\alpha$}),
M(\tdquatredeux{$\beta$}{$x$}{$\alpha$}{$\gamma$}),
M(\tdquatreun{$\beta$}{$x$}{$\gamma$}{$\alpha$})
\right\},\\
\mathcal{FM}_{()}(A,B)&=\left\{
\begin{array}{c}
\begin{pmatrix}
\beta&=&h&\textcolor{red}{h}&\textcolor{red}{h}\\
\gamma&0&=&\textcolor{red}{h}&\textcolor{red}{h}\\
\alpha&0&0&=&r\\
x&0&0&0&=
\end{pmatrix},\begin{pmatrix}
\beta&=&h&\textcolor{red}{h}&\textcolor{red}{h}\\
\gamma&0&=&\textcolor{red}{r}&\textcolor{red}{r}\\
\alpha&0&0&=&r\\
x&0&0&0&=
\end{pmatrix},\begin{pmatrix}
\beta&=&h&\textcolor{red}{h}&\textcolor{red}{r}\\
\gamma&0&=&\textcolor{red}{r}&\textcolor{red}{r}\\
\alpha&0&0&=&r\\
x&0&0&0&=
\end{pmatrix},\\
\begin{pmatrix}
	\beta&=&h&\textcolor{red}{r}&\textcolor{red}{r}\\
	\gamma&0&=&\textcolor{red}{r}&\textcolor{red}{r}\\
	\alpha&0&0&=&r\\
	x&0&0&0&=
\end{pmatrix},\begin{pmatrix}
\beta&=&h&\textcolor{red}{h}&\textcolor{red}{h}\\
\gamma&0&=&\textcolor{red}{h}&\textcolor{red}{r}\\
\alpha&0&0&=&r\\
x&0&0&0&=
\end{pmatrix},\begin{pmatrix}
\beta&=&h&\textcolor{red}{h}&\textcolor{red}{r}\\
\gamma&0&=&\textcolor{red}{h}&\textcolor{red}{r}\\
\alpha&0&0&=&r\\
x&0&0&0&=
\end{pmatrix}
\end{array}\right\}\\
&=\left\{
M(\tdquatrequatre{$\beta$}{$\gamma$}{$x$}{$\alpha$}),
M(\tdquatreun{$\beta$}{$x$}{$\alpha$}{$\gamma$}),
M(\tdtroisun{$\beta$}{$\alpha$}{$\gamma$}\tdun{$x$}),
M(\tddeux{$\beta$}{$\gamma$}\tdun{$\alpha$}\tdun{$x$}),
M(\tdquatredeux{$\beta$}{$x$}{$\gamma$}{$\alpha$}),
M(\tdtroisdeux{$\beta$}{$\gamma$}{$\alpha$}\tdun{$x$})
\right\}.
\end{align*}
\end{exam}

\begin{theorem}\label{thm:dul}
Let $F,G\in \rfs$, with respectively $k$ and $l$ vertices.
\begin{align*}
F\star G&=\sum_{\sigma \in \sh(k,l)} \sum_{C\in \mathcal{FM}_\sigma(M(F),M(G))}M^{-1}(C).
\end{align*}
\end{theorem}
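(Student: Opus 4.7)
The starting point is the combinatorial description of $\Delta=\Delta_0$ given by Theorem~\mref{thm:comcoproduct} together with the duality formula above the theorem, which yield
\[F\star G=\sum_{H\in \rfs} |N(F,G;H)|\, H,\qquad N(F,G;H)=\{I\subseteq V(H)\mid H_I=F,\: H_{V(H)\setminus I}=G\}.\]
Hence it suffices to build, for each $H\in \rfs$ with $k+l$ vertices, a bijection
\[N(F,G;H)\;\longleftrightarrow\;\bigsqcup_{\sigma\in\sh(k,l)}\bigl\{C\in \mathcal{FM}_\sigma(M(F),M(G))\mid M^{-1}(C)=H\bigr\}.\]

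Given $(H,I)$, I would list the vertices of $H$ in the total order $\leq_{h,r}$ as $v_1\leq_{h,r}\cdots\leq_{h,r} v_{k+l}$, write $I=\{v_{\sigma(1)},\ldots,v_{\sigma(k)}\}$ with $\sigma(1)<\cdots<\sigma(k)$ and $V(H)\setminus I=\{v_{\sigma(k+1)},\ldots,v_{\sigma(k+l)}\}$ with $\sigma(k+1)<\cdots<\sigma(k+l)$. This canonically produces a shuffle $\sigma\in\sh(k,l)$, and I associate to $(H,I)$ the matrix $C:=M(H)$. The inverse map sends $(\sigma,C)$ to $H:=M^{-1}(C)$ with $I:=\{v_{\sigma(1)},\ldots,v_{\sigma(k)}\}$, where the $v_i$'s are indexed by the $\leq_{h,r}$-order on $H$.

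The key step is to verify the block condition $C_{\sigma(i),\sigma(j)}=M(F)_{i,j}$ for $1\leq i,j\leq k$ (and the analogous identity for $M(G)$). For this I would establish, as a short preliminary lemma, that if $H_I$ is the induced subforest on $I\subseteq V(H)$, then the partial orders $\leq_h$ and $\leq_r$ on $H_I$ are the restrictions to $I$ of the corresponding orders on $H$; consequently the $\leq_{h,r}$-linear order on the vertices of $H_I$ (viewed as an autonomous forest) is the restriction of $\leq_{h,r}$ from $H$. The inclusion $\leq_h\restriction_I\ \subseteq\ \leq_h^{H_I}$ is immediate from the construction of $H_I$ (ancestors of a vertex $v\in I$ that lie in $I$ are still ancestors in $H_I$), and the reverse inclusion follows from the way edges are drawn via smallest ancestors; the claim for $\leq_r$ is similar, using that the plane structure on $H_I$ is inherited from $H$. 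Granting this, condition $H_I=F$ is equivalent to matching the decorations and the two partial orders between $F$ and $H_I$, which is exactly the assertion $C_{\sigma(i),\sigma(j)}=M(F)_{i,j}$. The analogous equality holds for the $G$-block. The remaining entries $C_{\sigma(i),\sigma(k+j)}$ (with $1\leq i\leq k$, $1\leq j\leq l$) are unconstrained by the pair $(F,G)$ and range precisely over the forest-representable completions permitted by condition~(f) in the definition of $\mathcal{FM}(X,\Omega)$; this is exactly what cuts out $\mathcal{FM}_\sigma(M(F),M(G))$.

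The principal obstacle is the bookkeeping embedded in that preliminary lemma, namely that the total order $\leq_{h,r}$ on $H$ restricts correctly to the $\leq_{h,r}$ orders on $H_I$ and $H_{V(H)\setminus I}$, so that the reshuffling by $\sigma$ faithfully recovers the matrix encodings $M(F)$ and $M(G)$. Once this is in hand, both maps $(H,I)\mapsto(\sigma,C)$ and $(\sigma,C)\mapsto(H,I)$ are well-defined and mutually inverse, and summing over $H$ reorganizes $F\star G$ into the double sum in the statement. Examples~\mref{examproduct1}--\mref{examproduct3} provide concrete sanity checks of the bijection, in particular highlighting how condition (e) forces the $r$-entries when a row is decorated by an element of $X$.
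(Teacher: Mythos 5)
Your proposal is correct and follows essentially the same route as the paper: both start from $F\star G=\sum_{H}|N(F,G;H)|\,H$ and identify, for each $H$, the subsets $I$ with $H_I=F$ and $H_{V(H)\setminus I}=G$ with the shuffles $\sigma$ for which $M(H)\in\mathcal{FM}_\sigma(M(F),M(G))$, using the total order $\leq_{h,r}$ to extract $\sigma$. Your extra care in isolating the lemma that $\leq_h$ and $\leq_r$ on an induced subforest are the restrictions of the ambient orders is a point the paper asserts more briefly, but it is the same argument.
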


\begin{proof}
Let $H\in \rfs$ and $I\in N(F,G;H)$. We totally order the vertices of $H$:
\[v_1\leq_{h,r}\cdots \leq_{h,r} v_{k+l}.\]
We then put
\begin{align*}
I&=\{v_{i_1},\ldots,v_{i_k}\},&
V(H)\setminus I&=\{v_{j_1},\ldots,v_{j_l}\},
\end{align*}
with $i_1<\ldots<i_k$ and $j_1<\ldots<j_l$.
We consider the $(k,l)$ shuffle $\sigma=(i_1,\ldots,i_k,j_1,\ldots,j_l)$.
As $H_{I}=F$ and $H_{V(H)\setminus I}=G$:
\begin{enumerate}
\item If $1\leq i\leq k$, the decoration of $v_{\sigma(j)}$ is the decoration of the $i$-th vertex of $F$, so $M(H)_{\sigma(i),0}=M(F)_{i,0}$.
Similarly, if $k+1\leq i\leq k+l$,
 $M(H)_{\sigma(i),0}=M(G)_{i-k,0}$.
\item If $1\leq i<j\leq k$, then $v_{\sigma(i)}\leq_h v_{\sigma(j)}$ in $H$ if, and only if $v_i\leq_h v_j$ in $F$. So $M(H)_{\sigma(i),\sigma(j)}=M(F)_{i,j}$.
Similarly, if $k+1\leq i,j\leq k+l$, $M(H)_{\sigma(i),\sigma(j)}=M(G)_{i-k,j-k}$.
\end{enumerate}
So $M(H) \in \mathcal{FM}_\sigma(M(F),M(G))$.
Conversely, if $\sigma \in \sh(k,l)$ and $M(H)\in \mathcal{FM}_\sigma(M(F),M(H))$, then, by definition,
\begin{align*}
H_{\{\sigma(1),\ldots,\sigma(k)\}}&=F,&
H_{\{\sigma(k+1),\ldots,\sigma(k+l)\}}&=G.
\end{align*}
Therefore, we obtain that
\[|N(F,G;H)|=|\{\sigma \in \sh(k,l)\mid M(H)\in \mathcal{FM}_\sigma(M(F),M(G))\}|.\]
We obtain
\begin{align*}
F\star G&=\sum_{H\in \rfs} |N(F,G;H)| H\\
&=\sum_{H\in \rfs}|\{\sigma \in \sh(k,l)\mid  M(H)\in \mathcal{FM}_\sigma(M(F),M(G))\}| H\\
&=\sum_{\sigma \in \sh(k,l)} \sum_{C\in \mathcal{FM}_\sigma(M(F),M(G))} M^{-1}(C). \qedhere
\end{align*}
\end{proof}

\begin{exam}
From Examples \ref{examproduct1} and \ref{examproduct2}, we obtain
\begin{align*}
\tdun{$\alpha$}\star
\tddeux{$\beta$}{$\gamma$}
&=\tdtroisdeux{$\alpha$}{$\beta$}{$\gamma$}+\tdun{$\alpha$}\tddeux{$\beta$}{$\gamma$}+
\tdtroisdeux{$\beta$}{$\alpha$}{$\gamma$}
+\tdtroisun{$\beta$}{$\gamma$}{$\alpha$}
+\tdtroisdeux{$\beta$}{$\gamma$}{$\alpha$}
+\tdtroisun{$\beta$}{$\alpha$}{$\gamma$}
+\tddeux{$\beta$}{$\gamma$}\tdun{$\alpha$},\\
	\tdun{$x$}\star
\tddeux{$\beta$}{$\gamma$}
&=\tdun{$x$}\tddeux{$\beta$}{$\gamma$}+\tdtroisun{$\beta$}{$\gamma$}{$x$}
+\tdtroisdeux{$\beta$}{$\gamma$}{$x$}
+\tdtroisun{$\beta$}{$x$}{$\gamma$}
+\tddeux{$\beta$}{$\gamma$}\tdun{$x$},\\
\tdun{$\alpha$}\tdun{$x$}\star\tddeux{$\beta$}{$\gamma$}&=
\tdun{$\alpha$}\tdun{$x$}\tddeux{$\beta$}{$\gamma$}+\tdun{$\alpha$} \tdtroisun{$\beta$}{$\gamma$}{$x$}+\tdtroisdeux{$\alpha$}{$\beta$}{$\gamma$}\tdun{$x$}+\tdun{$\alpha$}\tddeux{$\beta$}{$\gamma$}\tdun{$x$}+\tdun{$\alpha$}\tdtroisun{$\beta$}{$x$}{$\gamma$}+\tdun{$\alpha$} \tdtroisdeux{$\beta$}{$\gamma$}{$x$}
+\tdquatreun{$\beta$}{$\gamma$}{$x$}{$\alpha$}
\tdtroisdeux{$\beta$}{$\alpha$}{$\gamma$}\tdun{$x$}+
\tdquatretrois{$\beta$}{$\gamma$}{$x$}{$\alpha$}\\
&+
\tdquatredeux{$\beta$}{$x$}{$\alpha$}{$\gamma$}+
\tdquatreun{$\beta$}{$x$}{$\gamma$}{$\alpha$}+
\tdquatrequatre{$\beta$}{$\gamma$}{$x$}{$\alpha$}+
\tdquatreun{$\beta$}{$x$}{$\alpha$}{$\gamma$}+
\tdtroisun{$\beta$}{$\alpha$}{$\gamma$}\tdun{$x$}+
\tddeux{$\beta$}{$\gamma$}\tdun{$\alpha$}\tdun{$x$}+
\tdquatredeux{$\beta$}{$x$}{$\gamma$}{$\alpha$}+
\tdtroisdeux{$\beta$}{$\gamma$}{$\alpha$}\tdun{$x$}.
\end{align*}\end{exam}

\begin{remark}
It is also possible to dualise the coproduct $\Delta_\lambda$ if $\lambda \neq 0$. For this, shuffles are replaced by quasi-shuffles, that is to say surjective maps $\sigma:\{1,\ldots,k+l\}\longrightarrow \{1,\ldots,n\}$, such that
\begin{align*}
	&\sigma(1)<\cdots<\sigma(k),&&\sigma(k+1)<\cdots<\sigma(k+l).
\end{align*}
The set of $(k,l)$-quasi-shuffles is denoted by $\qsh(k,l)$.
If $A$ and $B$ are two forest-representable matrices, with respectively $k$ and $l$ rows, and $\sigma \in \qsh(k,l)$, we denote by $\mathcal{FM}_\sigma(A,B)$ the set of forest-representable matrices with $n=\max(\sigma)$ rows such that:
\begin{itemize}
\item If $1\leq i\leq k$, $C_{\sigma(i),0}=A_{i,0}$.
\item If $k+1\leq i\leq k+l$, $C_{\sigma(i),0}=B_{i-k,0}$.
\item If $1\leq i,j\leq k$, $C_{\sigma(i),\sigma(j)}=A_{i,j}$.
\item If $k+1\leq i,j\leq k+l$, $C_{\sigma(i),\sigma(j)}=B_{i-k,j-k}$.
\item if $1\leq i\leq k$ and $k+1\leq j\leq k+l$, with $\sigma(i)=\sigma(j)$, then $A_{i,0}=B_{j-k,0}\in X$.
\end{itemize}
Then
\begin{align*}
F\star_\lambda G&=\sum_{\sigma \in \qsh(k,l)} \sum_{C\in \mathcal{FM}_\sigma(M(F),M(G))}\lambda^{k+l-\max(\sigma)}M^{-1}(C).
\end{align*}
For example, if $x\in X$ and $\alpha \in \Omega$,
\begin{align*}
\tdun{$x$}\star_\lambda \tddeux{$\alpha$}{$x$}
&=\tdun{$x$}\tddeux{$\alpha$}{$x$}+\tdtroisun{$\alpha$}{$x$}{$x$}
+\tdtroisdeux{$\alpha$}{$x$}{$x$}
+\tdtroisun{$\alpha$}{$x$}{$x$}
+\tddeux{$\alpha$}{$x$}\tdun{$x$}+\lambda \tddeux{$\alpha$}{$x$}=\tdun{$x$}\tddeux{$\alpha$}{$x$}+2\tdtroisun{$\alpha$}{$x$}{$x$}
+\tdtroisdeux{$\alpha$}{$x$}{$x$}
+\tddeux{$\alpha$}{$x$}\tdun{$x$}+\lambda \tddeux{$\alpha$}{$x$}.
\end{align*}
\end{remark}

\section{Hopf algebras and Rota-Baxter operators on  rooted forests}\label{sec:idc}
 We now show that there is a cocommutative Hopf algebra structure on decorated planar rooted forests under the special case of $\lambda=0$.
In this special case, we have the following results.

\subsection{Moerdijk Hopf algebras on decorated plan rooted forests}

\begin{theorem} \mlabel{cor:hopf}
$(\hrts, \conc, \etree, \col, \counit)$ is a connected graded cocommutative Hopf algebra.
\end{theorem}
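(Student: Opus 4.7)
The plan is to leverage Theorem~\ref{thm:main}, which already provides the bialgebra structure for arbitrary $\lambda$, and simply add three more facts in the specialization $\lambda=0$: the grading by weight, connectedness, and cocommutativity. The Hopf algebra conclusion will then follow from the classical result that every connected graded bialgebra admits a (unique) antipode via the convolution recursion.

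First I would set $\hrts=\bigoplus_{n\geq 0}\hrts_n$ where $\hrts_n$ is the $\bfk$-span of forests of weight $n$ (total number of vertices). The concatenation product is manifestly homogeneous since the weight is additive under juxtaposition of forests. For the coproduct, I would invoke the combinatorial formula of Theorem~\ref{thm:comcoproduct} in the case $\lambda=0$, namely
\[
\Delta(F)=\sum_{I\subseteq V(F)} F_I\otimes F_{V(F)\setminus I},
\]
and observe that if $|V(F)|=n$ and $|I|=k$, then $F_I$ has weight $k$ and $F_{V(F)\setminus I}$ has weight $n-k$, so $\Delta(\hrts_n)\subseteq \bigoplus_{k=0}^{n}\hrts_k\otimes \hrts_{n-k}$. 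The counit $\counit$ is homogeneous of degree $0$ by its definition (Eq.~(\ref{eq:counit})). Hence $(\hrts,\conc,\etree,\Delta,\counit)$ is a graded bialgebra.

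For connectedness, the only forest of weight $0$ is $\etree$, so $\hrts_0=\bfk\etree\cong\bfk$. For cocommutativity, the same combinatorial formula makes it transparent: the involution $I\mapsto V(F)\setminus I$ on the power set $\mathcal{P}(V(F))$ swaps the two tensor factors, so $\tau\circ\Delta=\Delta$ where $\tau$ is the flip. (Alternatively, one can argue inductively on $\dep(F)$ and $\bre(F)$ using Eqs.~(\ref{eq:dele}), (\ref{eq:dbp}), (\ref{eq:delee1}), noting that the three cases $\bullet_x$, $B_\omega^+$, and concatenation each preserve cocommutativity at $\lambda=0$; the $\lambda=0$ specialization kills the asymmetric $\lambda\,\bullet_x\otimes \bullet_x$ term, which is crucial.)

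Finally, any connected graded bialgebra $H=\bigoplus_{n\geq 0}H_n$ with $H_0=\bfk$ is automatically a Hopf algebra: one defines $S(\etree)=\etree$ and, for $x\in H_n$ with $n\geq 1$ using Sweedler notation $\Delta(x)=x\otimes \etree+\etree\otimes x+\sum x'\otimes x''$ with $x',x''$ of strictly smaller degree, sets $S(x)=-x-\sum S(x')x''$, and checks $S*\id=\id*S=\eta\circ\counit$ by induction on $n$. This is a standard result (see e.g.~\cite{Foi18,Moe01}). I do not anticipate a real obstacle here; the only subtle point is making sure that the combinatorial formula of Theorem~\ref{thm:comcoproduct} is applied cleanly to establish gradedness and cocommutativity, but both follow at once from its symmetric form in the case $\lambda=0$.
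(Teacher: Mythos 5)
Your proposal is correct and follows essentially the same route as the paper: gradedness and connectedness from the weight decomposition, cocommutativity from the complementation involution $I\mapsto V(F)\setminus I$ in the combinatorial formula of Theorem~\ref{thm:comcoproduct}, and the antipode from the standard fact that a connected graded bialgebra is a Hopf algebra (which the paper cites rather than re-derives). No gaps.
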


\begin{proof}
We  know that $\hrts=\mathop{\oplus} \limits_{n \geq 0} \hrts_n$ with $\hrts_0 \cong \bfk$ is a graded algebra. Moreover, by Eq.~(\mref{eq:comcoproduct}), $\hrts$ is a graded connected bialgebra with the coproduct $\col$. By Lemma~2 of~\mcite{Fo3}, $\hrts$ is a Hopf algebra.

Furthermore, by Eq.~(\mref{eq:comcoproduct}) and the fact that the set of all subsets of a set equals the set of all complements of subsets of a set, $\hrts$ is a cocommutative Hopf algebra.
\end{proof}

Since $(\hrts, \conc, \etree, \col, \counit)$ is a connected graded Hopf algebra, it has an antipode $S$. Now we give a combinatorial description of $S$.

\begin{theorem}\mlabel{thm:comantipode}
For any decorated planar rooted forest $F \in \rfs\setminus\{1\}$,
\begin{align}
S(F)=\sum \limits_{I_1 \sqcup \cdots \sqcup I_k=V(F)}(-1)^k F_{I_1} \cdots F_{I_k}.
\mlabel{eq:comantipode}
\end{align}
\end{theorem}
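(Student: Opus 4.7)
The plan is to verify Eq.~(\mref{eq:comantipode}) directly by showing that the linear map $T \colon \hrts \to \hrts$ defined by its right-hand side, extended by $T(\etree) = \etree$, is a left convolution inverse of $\id_{\hrts}$; since the antipode $S$ provided by Theorem~\mref{cor:hopf} is a two-sided convolution inverse, uniqueness then yields $T = S$.

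A preliminary \emph{nesting lemma} is required: for any $F \in \rfs$ and any $J \subseteq I \subseteq V(F)$, the induced subforests satisfy $(F_I)_J = F_J$. This is immediate from the construction in Subsection~\mref{sucsec:deco}: for a vertex $v \in J$, its parent in either $F_J$ or $(F_I)_J$ is the smallest ancestor of $v$ in $F$ that lies in $J$ (here I use transitivity of the ancestor relation, together with the fact that ancestors of $v$ in $F$ form a chain), and the induced planar order is the same in both forests.

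Using the lemma together with the combinatorial coproduct formula Eq.~(\mref{eq:comcoproduct}), I would compute, for $F \in \rfs \setminus \{\etree\}$,
\begin{align*}
(T * \id)(F) &= \sum_{I \subseteq V(F)} T(F_I)\, F_{V(F) \setminus I} \\
&= F + \sum_{k \geq 1} \sum_{\substack{V(F) = I_1 \sqcup \cdots \sqcup I_{k+1} \\ I_1, \ldots, I_k \neq \emptyset}} (-1)^k\, F_{I_1} \cdots F_{I_{k+1}},
\end{align*}
where the leading $F$ comes from $I = \emptyset$ (using $T(\etree) = \etree$) and the last block $I_{k+1} := V(F) \setminus (I_1 \sqcup \cdots \sqcup I_k)$ is permitted to be empty. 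Each ordered composition $(J_1, \ldots, J_\ell)$ of $V(F)$ into $\ell \geq 1$ nonempty blocks then contributes $(-1)^\ell$ from the sub-case $I_{k+1} = \emptyset$ with $k = \ell$, and $(-1)^{\ell-1}$ from the sub-case $I_{k+1} = J_\ell \neq \emptyset$ with $k = \ell - 1$ (admissible only for $\ell \geq 2$); for $\ell = 1$ the leading $F$ supplies an extra $+1$ that cancels the $k = 1$, $I_2 = \emptyset$ term. In every case the signed total vanishes, so $(T * \id)(F) = 0 = (u \circ \counit)(F)$, where $u \colon \bfk \to \hrts$ denotes the unit. Convolving on the right with $S$ gives $T = T * (\id * S) = (T * \id) * S = (u \circ \counit) * S = S$.

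The only nontrivial step is the nesting lemma; once that is in hand, the remainder is routine sign cancellation. An essentially equivalent and slightly shorter route would be to apply Takeuchi's formula $S = \sum_{k \geq 1}(-1)^k \conc^{(k-1)} \circ \bar{\col}^{(k-1)}$ for the antipode in a connected graded bialgebra, expand $\bar{\col}^{(k-1)}(F)$ via Eq.~(\mref{eq:comcoproduct}) and the nesting lemma to obtain a sum over ordered compositions of $V(F)$ into $k$ nonempty blocks, and read off Eq.~(\mref{eq:comantipode}) in one stroke.
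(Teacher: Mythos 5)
Your proposal is correct. Your primary route (define $T$ by the right-hand side of Eq.~(\mref{eq:comantipode}), check by direct sign cancellation that $T*\id=u\circ\counit$, and invoke uniqueness of convolution inverses) is a slightly more elementary variant of the paper's argument, which instead splits $\col(F)=F\ot\etree+\etree\ot F+\tilde{\Delta}(F)$, observes that the $(k-1)$-fold iterate of the reduced coproduct $\tilde{\Delta}$ is the sum over ordered decompositions of $V(F)$ into $k$ nonempty blocks, and reads off the formula from Takeuchi's formula $S=\sum_{k\geq 1}(-1)^k\conc^{(k-1)}\circ\tilde{\Delta}^{(k-1)}$ --- i.e.\ exactly the ``shorter route'' you sketch in your final paragraph. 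The two arguments rest on the same combinatorial input: both need the nesting property $(F_I)_J=F_J$ for $J\subseteq I$ (the paper uses it implicitly when asserting the iterated-coproduct formula; you isolate it as a lemma, which is a genuine improvement in rigor --- though your justification should also note that the ancestor chain of a vertex in $F_I$ is precisely the restriction to $I$ of its ancestor chain in $F$, which is what makes ``smallest ancestor in $J$'' unambiguous). What Takeuchi's formula buys the paper is that the telescoping sign cancellation you carry out by hand is packaged once and for all into a general theorem for connected graded bialgebras; what your direct verification buys is independence from that citation and an explicit check that the claimed expression really is a convolution inverse.
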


%

\begin{proof}
By Theorem \mref{thm:comcoproduct}, the coproduct of $\hrts$ is given by
\begin{align*}
&\forall F\in \rfs\setminus\{1\},& \Delta(F)&=F\otimes 1+1\otimes F+\underbrace{\sum_{I_1\sqcup I_2=V(F)} F_{\mid I_1}\otimes F_{\mid I_2}}_{:=\tilde{\Delta}(F)}.
\end{align*}
The coproduct $\tilde{\Delta}$ defined in this way is also coassociative, and a direct induction proves that
for any $n\geq 1$, then $n-1$-iteration of $\tilde{\Delta}$ is given by
\begin{align*}
&\forall F\in \rfs\setminus\{1\},& \tilde{\Delta}^{(n-1)}(F)&=\sum \limits_{I_1 \sqcup \cdots \sqcup I_k=V(F)} F_{I_1} \otimes \cdots\otimes F_{I_k}.
\end{align*}
By Takeuchi's formula \cite{Takeuchi71}, the antipode is given by
\begin{align*}
&\forall F\in \rfs\setminus\{1\},& S(F)&=\sum_{k=1}^\infty (-1)^k m^{(k-1)}\circ \tilde{\Delta}^{(k-1)}(F)\\
&&&=\sum \limits_{I_1 \sqcup \cdots \sqcup I_k=V(F)}(-1)^k F_{I_1} \cdots F_{I_k}. \qedhere
\end{align*}
\end{proof}

\begin{exam}
By Theorem~\mref{thm:comantipode}, we have
\begin{align*}
&\ S(\tdun{$x$}\tddeux{$\alpha$}{$y$})\\
=&\ (-1)^1(F_{\{x,\alpha,y\}})+(-1)^2(F_{\{x,\alpha\}}F_{\{y\}}+F_{\{x,y\}}F_{\{\alpha\}}+F_{\{\alpha,y\}}F_{\{x\}}+F_{\{\alpha\}}F_{\{x,y\}}
+F_{\{y\}}F_{\{x,\alpha\}}+F_{\{x\}}F_{\{\alpha,y\}})\\
&\ +(-1)^3(F_{\{x\}}F_{\{\alpha\}}F_{\{y\}}+F_{\{x\}}F_{\{y\}}F_{\{\alpha\}}+F_{\{\alpha\}}F_{\{x\}}F_{\{y\}}+F_{\{\alpha\}}F_{\{y\}}F_{\{x\}}
+F_{\{y\}}F_{\{x\}}F_{\{\alpha\}}+F_{\{y\}}F_{\{\alpha\}}F_{\{x\}})\\
=&\ \tddeux{$\alpha$}{$y$}\tdun{$x$}-\tdun{$\alpha$}\tdun{$y$}\tdun{$x$}-\tdun{$y$}\tdun{$\alpha$}\tdun{$x$},\\
&\ S(\tdtroisun{$\alpha$}{$\beta$}{$x$})\\
=&\ (-1)^1(F_{\{x,\alpha,\beta\}})+(-1)^2(F_{\{x,\alpha\}}F_{\{\beta\}}+F_{\{x,\beta\}}F_{\{\alpha\}}+F_{\{\alpha,\beta\}}F_{\{x\}}
+F_{\{\alpha\}}F_{\{x,\beta\}}+F_{\{\beta\}}F_{\{x,\alpha\}}+F_{\{x\}}F_{\{\alpha,\beta\}})\\
&\ +(-1)^3(F_{\{x\}}F_{\{\alpha\}}F_{\{\beta\}}+F_{\{x\}}F_{\{\beta\}}F_{\{\alpha\}}+F_{\{\alpha\}}F_{\{x\}}F_{\{\beta\}}
+F_{\{\alpha\}}F_{\{\beta\}}F_{\{x\}}+F_{\{\beta\}}F_{\{x\}}F_{\{\alpha\}}+F_{\{\beta\}}F_{\{\alpha\}}F_{\{x\}})\\
=&\ -\tdtroisun{$\alpha$}{$\beta$}{$x$}+\tddeux{$\alpha$}{$x$}\tdun{$\beta$}+\tddeux{$\alpha$}{$\beta$}\tdun{$x$}
+\tdun{$\beta$}\tddeux{$\alpha$}{$x$}+\tdun{$x$}\tddeux{$\alpha$}{$\beta$}-\tdun{$x$}\tdun{$\alpha$}\tdun{$\beta$}-\tdun{$\alpha$}\tdun{$\beta$}\tdun{$x$}
-\tdun{$\beta$}\tdun{$x$}\tdun{$\alpha$}-\tdun{$\beta$}\tdun{$\alpha$}\tdun{$x$}.
\end{align*}
\end{exam}

\begin{remark}
When $\lambda \neq 0$ and $X$ is nonempty, choosing an $x\in X$,
\[\Delta(1+\lambda \tdun{$x$})=(1+\lambda \tdun{$x$})\otimes (1+\lambda \tdun{$x$}).\]
We obtain that $\hrts$ has non invertible group-like elements, so is not a Hopf algebra.
\end{remark}

\subsection{Rota-Baxter operator on Moerdijk Hopf algebras}

In this subsection, we show that the antipode is a Rota-Baxter operator on Moerdijk Hopf algebras.

\begin{defn}\cite{Gon21} Let $H$ be a cocommutative Hopf algebra.  A coalgebra homomorphism $\frakB:H\to H$ is called a {\bf Rota-Baxter operator on $H$} if
\begin{equation}
\frakB(a)\frakB(b)=\frakB\Big(a_1\frakB(a_2)bS(\frakB(a_3) )\Big), \quad \forall a, b\in H.
\mlabel{eq:rbog}
\end{equation}
\end{defn}

\begin{lemma}~\cite[Corollary 1]{Gon21}
Let $H$ be a cocommutative Hopf algebra with the antipode $S$. Then $S$ is a Rota-Baxter operator on $H$. \label{lemm:cH}
\end{lemma}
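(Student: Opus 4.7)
The plan is to check the two requirements of the Rota--Baxter definition separately: that $S$ is a coalgebra homomorphism, and that it satisfies identity~\meqref{eq:rbog}.

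For the first requirement, in any Hopf algebra the antipode is an anti-coalgebra morphism, i.e.\ $\Delta\circ S = (S\otimes S)\circ \tau\circ \Delta$ with $\tau$ the flip, and $\varepsilon\circ S = \varepsilon$. When $H$ is cocommutative, $\tau\circ \Delta = \Delta$, so $\Delta\circ S = (S\otimes S)\circ \Delta$ and $S$ is a genuine coalgebra homomorphism.

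For the Rota--Baxter identity itself, I would simplify the right-hand side of~\meqref{eq:rbog} with $\frakB=S$ by direct Sweedler-calculus. Cocommutativity forces $S^2=\id$, so $S(S(a_3))=a_3$ and the right-hand side reduces to $S(a_1 S(a_2) b\, a_3)$. Parsing the threefold coproduct via coassociativity as $(\Delta\otimes\id)\Delta(a) = \sum (a_{(1)})_{(1)} \otimes (a_{(1)})_{(2)} \otimes a_{(2)}$ and applying the antipode axiom $\sum x_{(1)} S(x_{(2)}) = \varepsilon(x)\,1_H$ to the first two slots collapses them:
\[\sum a_1\, S(a_2)\, b\, a_3 \;=\; \sum \varepsilon(a_{(1)})\,1_H\cdot b\cdot a_{(2)} \;=\; b \sum \varepsilon(a_{(1)})\, a_{(2)} \;=\; ba.\]
Applying $S$ and using the fact that $S$ is always an anti-algebra morphism then gives $S(ba) = S(a)\,S(b)$, which is the left-hand side.

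The argument is essentially formal: the antipode axiom together with coassociativity collapses $a_1 S(a_2)\cdots a_3$ to $ba$ in any Hopf algebra, and cocommutativity enters only to supply $S^2=\id$ and the coalgebra-homomorphism property of $S$. There is no substantive obstacle; the only point worth flagging is that without cocommutativity the reduction $S(S(a_3))=a_3$ would fail and the identity would not take this clean form.
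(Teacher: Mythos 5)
Your proof is correct. Note that the paper itself gives no proof of this lemma: it is quoted verbatim from Goncharov \cite[Corollary 1]{Gon21}, so there is nothing internal to compare against; your direct Sweedler-calculus verification (collapse $\sum a_{(1)}S(a_{(2)})\,b\,a_{(3)}$ to $ba$ via coassociativity and the antipode axiom, then apply the anti-algebra morphism $S$) is the standard argument and matches what one finds in the cited source. The only step you assert without justification is that cocommutativity implies $S^2=\id$; this is a classical fact (e.g.\ $S\circ S$ is a convolution inverse of $S$, as is $\id$), but since the whole reduction of the right-hand side of Eq.~\eqref{eq:rbog} hinges on $S(S(a_3))=a_3$, a one-line justification or citation would make the argument fully self-contained.
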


\begin{prop}\mlabel{prop:comantipode}
The antipode $S$ on $\hrts$ is a Rota-Baxter operator on the cocommutative Hopf algebra $\hrts$.
\end{prop}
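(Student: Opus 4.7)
The statement is essentially an immediate consequence of two facts already established in the excerpt: by Theorem~\ref{cor:hopf}, the quintuple $(\hrts, m_{\mathrm{RT}}, \etree, \Delta, \varepsilon_{\mathrm{RT}})$ is a connected graded cocommutative Hopf algebra; and by Lemma~\ref{lemm:cH} (Goncharov's result), the antipode of any cocommutative Hopf algebra is automatically a Rota-Baxter operator on it. So the plan is simply to invoke these two results in sequence.

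More precisely, I would first recall that $\hrts$ carries the structure of a cocommutative Hopf algebra with antipode $S$; cocommutativity follows from the symmetric form of $\Delta(F)=\sum_{I\subseteq V(F)} F_I\otimes F_{V(F)\setminus I}$ given in Theorem~\ref{thm:comcoproduct} (the involution $I\mapsto V(F)\setminus I$ on subsets of $V(F)$ swaps the two tensor factors), while the existence of $S$ is guaranteed by the connected graded structure, with an explicit combinatorial description given in Theorem~\ref{thm:comantipode}. Then I would apply Lemma~\ref{lemm:cH} directly to $H=\hrts$ to conclude that $S$ satisfies the Rota-Baxter identity
\begin{equation*}
S(a)S(b)=S\bigl(a_{(1)}\,S(a_{(2)})\,b\,S\bigl(S(a_{(3)})\bigr)\bigr)\quad\text{for all }a,b\in \hrts,
\end{equation*}
and that $S$ is a coalgebra homomorphism (which is automatic for the antipode of a cocommutative Hopf algebra since, in that case, $S$ is an anti-coalgebra map, and the anti-coalgebra condition coincides with the coalgebra condition).

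There is no real obstacle here; the proof is a one-line citation. If one wanted to be self-contained, the only small points to verify by hand would be (i) cocommutativity of $\Delta$ on the generators (which is immediate from the combinatorial formula) and (ii) that $S$ is a coalgebra homomorphism, which for a cocommutative Hopf algebra follows from the general fact that the antipode is an anti-homomorphism of both algebra and coalgebra structures, reducing to a genuine coalgebra homomorphism under cocommutativity.
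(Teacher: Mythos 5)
Your proof is correct and is essentially the paper's own argument: a one-line application of Goncharov's result (Lemma~\ref{lemm:cH}) to the cocommutative Hopf algebra $\hrts$. The only cosmetic difference is that you cite cocommutativity via Theorem~\ref{cor:hopf} (which is indeed the hypothesis Lemma~\ref{lemm:cH} actually needs), whereas the paper cites the explicit antipode formula of Theorem~\ref{thm:comantipode}; both routes are immediate.
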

\begin{proof}
It directly follows Theorem~\mref{thm:comantipode} and Lemma~\mref{lemm:cH}.
\end{proof}

\section{Free $\Omega$-cocycle bialgebras}\mlabel{sec:icw}

The main goal of this subsection is to understand Hopf algebras from the point view of operated algebras. This leads to the natural definition of an $\Omega$-cocycle  bialgebra.
We show that $\hrts$ is a free object in one of such categories.

\subsection{Operated algebras and free operated algebras}

\begin{defn}\cite{Guo09} Let $\Omega$ be a set.
\begin{enumerate}
\item An {\bf $\Omega$-operated monoid } is a monoid $M$ together with a set of operators $P_{\omega}: M\to M$, $\omega\in \Omega$.

\item An {\bf $\Omega$-operated unitary algebra } is a unitary algebra $A$ together with a set of linear operators $P_{\omega}: A\to A$, $\omega\in \Omega$.
\end{enumerate}
\end{defn}

When $\Omega$ is a singleton set, we will omit it.
From now on, our discussion takes place on $\hrts= \bfk \rfs$. The following results generalizes the universal properties which were studied in~\cite{CK98, Fo3, Guo09, Moe01, ZGG16}.

\begin{lemma}\cite{ZCGL19}
Let $j_{X}: X\hookrightarrow \rfs$, $x \mapsto \bullet_{x}$ be the natural embedding and $m_{\RT}$ be the concatenation product. Then, we have the following.
\begin{enumerate}
\item
The quadruple $(\rfs, \,\mul,\, 1, \, \{B_{\omega}^+\mid \omega\in \Omega\})$ together with the $j_X$ is the free $\Omega$-operated monoid on $X$.
\mlabel{it:fomonoid}
\item
The quadruple $(\hrts, \,\mul,\,1, \, \{B_{\omega}^+\mid \omega\in \Omega\})$ together with the $j_X$ is the free $\Omega$-operated unitary algebra on $X$.
\mlabel{it:fualg}
\end{enumerate}
\mlabel{lem:free}
\end{lemma}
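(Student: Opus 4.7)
The plan is to prove part (a) by explicitly constructing the universal morphism, and then to obtain part (b) as an immediate corollary by linear extension. The construction is entirely recursive, mirroring the inductive description of $\rfs$ as $\rfs = \bigcup_{n\ge 0} \calf_n$, where $\calf_{n+1} = M\bigl(\bullet_X \sqcup \bigsqcup_{\omega\in\Omega} B_\omega^+(\calf_n)\bigr)$.

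For part (a), let $(M,\cdot,1_M,\Po)$ be any $\Omega$-operated monoid and $f\colon X\to M$ any map. I would define a map $\bar f\colon \rfs \to M$ by induction on $\dep(F)$, and within each depth by induction on $\bre(F)$. At the base, set $\bar f(1):=1_M$ and $\bar f(\bullet_x):=f(x)$ for $x\in X$, and extend multiplicatively on breadth-$\ge 2$ elements of $\calf_0$. Inductively, for $F\in\calf_{n+1}$, if $\bre(F)\ge 2$ decompose uniquely as $F=T_1\cdots T_m$ with $T_i\in\rts$ and set $\bar f(F):=\bar f(T_1)\cdots\bar f(T_m)$; if $\bre(F)=1$ then $F=B_\omega^+(\lbar F)$ for a unique $\omega\in\Omega$ and a unique $\lbar F\in\calf_n$, and we set $\bar f(F):=P_\omega(\bar f(\lbar F))$. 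The key point for well-definedness is that the monoid $M(\rts)$ is free on $\rts$ (so the decomposition $F=T_1\cdots T_m$ into trees is unique), and that each non-leaf planar tree admits a unique presentation as $B_\omega^+$ applied to a smaller forest. By construction $\bar f$ satisfies $\bar f\circ j_X=f$, $\bar f(1)=1_M$, $\bar f(FG)=\bar f(F)\bar f(G)$, and $\bar f\circ B_\omega^+ = P_\omega\circ \bar f$ for every $\omega\in\Omega$, so it is a morphism of $\Omega$-operated monoids.

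For uniqueness, suppose $g\colon\rfs\to M$ is any $\Omega$-operated monoid morphism with $g\circ j_X=f$. Then $g(1)=1_M$, $g(\bullet_x)=f(x)$, and one shows $g=\bar f$ by induction on depth and breadth: the monoid morphism property forces agreement on products of trees, while the compatibility $g\circ B_\omega^+ = P_\omega\circ g$ propagates agreement from $\calf_n$ to $\calf_{n+1}$.

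Part (b) is then immediate: given a unitary algebra $A$ with operators $\Po$ and a linear map $f\colon \bfk X\to A$, first apply part (a) to the underlying multiplicative monoid of $A$ to obtain a set-theoretic morphism $\bar f\colon \rfs\to A$, then extend linearly to $\hrts=\bfk\rfs$. The extension remains multiplicative and commutes with each $B_\omega^+$, and uniqueness follows because $\rfs$ is a $\bfk$-basis of $\hrts$. The only genuinely delicate step is the unique-decomposition claim underlying the recursion; this is where one must invoke the freeness of $M(\rts)$ together with the disjoint-union structure inside each $\calf_{n+1}$, ensuring that $\bullet_x$ and $B_\omega^+(\lbar F)$ are distinct and that $\omega$ and $\lbar F$ are recoverable from $B_\omega^+(\lbar F)$. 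Everything else is a mechanical induction.
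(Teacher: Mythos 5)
Your proposal is correct. Note that the paper itself offers no proof of this lemma --- it is quoted from \cite{ZCGL19} --- so there is nothing internal to compare against; your argument is exactly the standard one underlying the cited result: define $\free{f}$ recursively along the filtration $\rfs=\bigcup_n\calf_n$ using the unique factorization of a forest into trees and the unique presentation of any tree with root decorated in $\Omega$ as $B_\omega^+(\lbar{F})$, then pass to part (b) by linear extension over the basis $\rfs$. The only wording I would tighten is ``each non-leaf planar tree admits a unique presentation as $B_\omega^+$ of a smaller forest'': the single-vertex tree $\bullet_\omega=B_\omega^+(\etree)$ is also of this form, and the correct dichotomy is between trees whose root is decorated by $X$ (necessarily $\bullet_x$, handled by the base case) and those whose root is decorated by $\Omega$ (handled by the recursive step), the two cases being disjoint precisely because $X\cap\Omega=\emptyset$.
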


\subsection{Free $\Omega$-cocycle Hopf algebras}

\begin{defn}
\begin{enumerate}
\item  An  {\bf $\Omega$-operated  bialgebra} (resp. {\bf Hopf algebra})  is a bialgebra (resp. Hopf algebra) $H$ together with a set of linear operators $P_{\omega}: H\to H$, $\omega\in \Omega$.
\mlabel{it:def1}
\item Let $(H,\, \{P_{\omega}\mid \omega \in \Omega\})$ and $(H',\,\{P'_{\omega}\mid \omega\in \Omega\})$ be two $\Omega$-operated bialgebras (resp. Hopf algebra). A linear map $\phi : H \rightarrow H'$ is called an {\bf $\Omega$-operated bialgebra} (resp. {\bf Hopf algebra}) {\bf morphism} if $\phi$ is a morphism of bialgebras (resp. Hopf algebras) and $\phi \circ P_\omega = P'_\omega \circ\phi$ for $\omega\in \Omega$.
\end{enumerate}
\end{defn}


%

By a  symmetric 1-cocycle condition, we then propose

\begin{defn}
\begin{enumerate}
\item \mlabel{it:def3}
An {\bf $\Omega$-cocycle bialgebra} is an $\Omega$-operated bialgebra $(H,\,m,\,1_H, \Delta,\varepsilon, \{P_{\omega}\mid \omega \in \Omega\})$ satisfying the following conditions:
\begin{align}
 \Delta P_{\omega} = (P_{\omega} \otimes \id)\Delta + (\id\otimes P_{\omega}) \Delta \quad \text{ for all }\,\, \omega\in \Omega.
\mlabel{eq:eqiterated}
\end{align}
Moreover, if $H$ is a Hopf algebra, then it is called a {\bf $\Omega$-cocycle Hopf algebra}.

\item The {\bf free $\Omega$-cocycle bialgebra on a set $X$} is an $\Omega$-cocycle bialgebra $(H_{X},\,m_{X}, \,1_{H_X}, \Delta_{X}, \,\{P_{\omega}\mid \omega \in \Omega\})$ together with a set map $j_X: X \to H_{X}$  with the property that,
for any $\Omega$-cocycle bialgebra $(H,\,m_H,\,1_H, \Delta_H,\,\{P'_{\omega}\mid \omega \in \Omega\})$ and any set map
$f: X\to H$ such that for any $x\in X$,
$$\Delta_H (f(x))=1_H \ot f(x)+f(x) \ot 1_H+\lambda f(x) \ot f(x),$$
there is a unique $\Omega$-operated bialgebra morphism $\free{f}:H_X\to H$ such that $\free{f} j_X=f$. The {\bf free $\Omega$-cocycle Hopf algebra on a set $X$} is defined similarly in the category of $\Omega$-cocycle Hopf algebras.
\mlabel{it:def4}
\end{enumerate}
\mlabel{defn:xcobi}
\end{defn}
\begin{prop}
Let $(H,\, m,\,1_H,\, \Delta,\,\varepsilon,\, \Po)$
be an $\Omega$-cocycle bialgebra. Then for each $\omega\in \Omega$,
$$P_{\omega} (H) := \{P_{\omega} (h) \mid h\in H\}$$ is an operated coideal of $H$.
\mlabel{lem:coidealv}
\end{prop}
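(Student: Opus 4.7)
The plan is to verify that $P_\omega(H)$ satisfies the defining conditions of a coideal inside the operated bialgebra $H$, with the symmetric $1$-cocycle identity \eqref{eq:eqiterated} providing the only nontrivial input. In outline: I will read off the comultiplication compatibility directly from \eqref{eq:eqiterated}, derive $\varepsilon \circ P_\omega = 0$ by applying $\varepsilon \otimes \id$ to that same identity, and finally comment on the operated part.

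For the comultiplication condition, take $h \in H$ and write $\Delta(h) = h_{(1)} \otimes h_{(2)}$ in Sweedler notation. Substituting into \eqref{eq:eqiterated} gives
\[
\Delta(P_\omega(h)) \;=\; P_\omega(h_{(1)}) \otimes h_{(2)} \;+\; h_{(1)} \otimes P_\omega(h_{(2)}) \;\in\; P_\omega(H) \otimes H + H \otimes P_\omega(H),
\]
so $\Delta(P_\omega(H)) \subseteq P_\omega(H) \otimes H + H \otimes P_\omega(H)$, which is immediate from \eqref{eq:eqiterated}.

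For the counit condition, I would apply $\varepsilon \otimes \id$ to both sides of the cocycle identity at $h$. The left-hand side becomes $P_\omega(h)$ by the counit axiom $(\varepsilon \otimes \id) \circ \Delta = \id$, while the right-hand side equals $\varepsilon(P_\omega(h_{(1)}))\, h_{(2)} + \varepsilon(h_{(1)})\, P_\omega(h_{(2)}) = \varepsilon(P_\omega(h_{(1)}))\, h_{(2)} + P_\omega(h)$. Cancelling yields $\varepsilon(P_\omega(h_{(1)}))\, h_{(2)} = 0$, whence applying $\varepsilon$ once more gives $\varepsilon(P_\omega(h)) = 0$, so $\varepsilon(P_\omega(H)) = 0$.

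The only real obstacle I foresee is pinning down what \emph{operated coideal} means: by strict analogy with operated ideals one might demand closure under every $P_{\omega'}$, but for $\omega' \neq \omega$ we only have $P_{\omega'}(P_\omega(H)) \subseteq P_{\omega'}(H)$, not necessarily into $P_\omega(H)$. Hence the intended notion must be the weaker one of a coideal in the ambient $\Omega$-operated bialgebra, the operated structure being recorded by the tautological inclusion $P_\omega(P_\omega(H)) \subseteq P_\omega(H)$. Under that reading the two displayed calculations finish the proof.
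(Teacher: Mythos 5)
Your proof is correct and follows essentially the same route as the paper: the comultiplication condition is read off directly from the cocycle identity, and $\varepsilon\circ P_\omega=0$ is extracted by pairing that identity with the counit axiom (the paper applies $\varepsilon\otimes\varepsilon$ and cancels one copy of $\varepsilon P_\omega(h)$ from $\varepsilon P_\omega(h)=2\varepsilon P_\omega(h)$, whereas you apply $\varepsilon\otimes\id$ first and then $\varepsilon$ --- the same computation in a different order). Your closing remark on the meaning of \emph{operated coideal} is a fair observation, since the paper's proof likewise only verifies the two coideal conditions and the tautological stability under $P_\omega$ itself.
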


\begin{proof}
Let $\omega\in \Omega$. We first show $P_{\omega} (h)\subseteq \ker \varepsilon$.
\begin{align*}
\varepsilon_H(h')&=\varepsilon_H\left(\sum_{(h')} h'_{(1)} \varepsilon_H( h'_{(2)})\right)
=\sum_{(h')} \varepsilon_H(h'_{(1)})  \varepsilon( h'_{(2)})
=(\varepsilon_H\ot \varepsilon_H) \Delta_H(h').
\end{align*}
Thus
\begin{align*}
\varepsilon_H(h')&=\varepsilon_H P_{\omega}(h)=(\varepsilon_H \ot_H \varepsilon_H) \Delta_H(h')=(\varepsilon_H \ot \varepsilon_H ) \Delta_H (P_{\omega}(h))\\
&=(\varepsilon_H \ot \varepsilon_H )\Big((P_{\omega} \ot \id+\id \ot P_{\omega})\Delta_H (h)\Big)\quad(\text{by Eq.~(\mref{eq:eqiterated})})\\
&=(\varepsilon_H P_{\omega }\ot \varepsilon_H )\Delta_H(h)+(\varepsilon_H \ot \varepsilon_H P_{\omega})\Delta_H(h)\\
&=\sum_{(h)}\varepsilon_H P_{\omega}(h_{(1)})\varepsilon_H (h_{(2)})+\sum_{(h)}\varepsilon_H (h_{(1)})\varepsilon_H P_{\omega}(h_{(2)})\\
&=\varepsilon_H P_{\omega}\left(\sum_{(h)}h_{(1)} \varepsilon_H (h_{(2)})\right)+\varepsilon_H P_{\omega}\left(\sum_{(h)}\varepsilon_H (h_{(1)})(h_{(2)})\right)\\
&=\varepsilon_H P_{\omega}(h)+\varepsilon_H P_{\omega}(h),
\end{align*}
which implies that $\varepsilon_H P_{\omega}(h)=0$.

Secondly, for any $h\in H$,
\begin{align*}
\Delta(P_{\omega}(h))&=(P_{\omega} \ot \id )\Delta(h)+(\id \ot P_{\omega})\Delta(h) \quad \text{ (by Eq.~(\mref{eq:eqiterated}))}\\
&= (P_{\omega} \ot \id ) \left( \sum_{(h)}(h_{(1)}\ot h_{(2)}) \right)+(\id \ot P_{\omega}) \left( \sum_{(h)}(h_{(1)}\ot h_{(2)}) \right)\\
&= \sum_{(h)}P_{\omega}(h_{(1)})\ot h_{(2)}+ \sum_{(h)}h_{(1)}\ot P_{\omega}(h_{(2)}) \in P_{\omega}(H) \otimes H+ H\otimes P_{\omega}(H).
\end{align*}
Thus $P_{\omega}(H)$ is an operated coideal.
\end{proof}

\begin{theorem}\mlabel{thm:freecocycle}
Let $j_X : X \rightarrow \hrts$, $x \rightarrow \bullet_x$ be the natural embedding. Then
\begin{enumerate}
\item \mlabel{it:11} $(\hrts,\, m_{\mathrm{RF}},$ $\etree, \,\coll, \varepsilon_{\mathrm{RF}}, \{ B_{\omega}^+ \mid \omega \in \Omega \})$ is the free $\Omega$-cocycle bialgebra on $X$.

\item \mlabel{it:22} $(\hrts,\, m_{\mathrm{RF}},$ $\etree, \,\col, \varepsilon_{\mathrm{RF}}, \{ B_{\omega}^+ \mid \omega \in \Omega \})$ is the free $\Omega$-cocycle Hopf algebra on $X$ under the case of $\lambda=0$.
\end{enumerate}
\end{theorem}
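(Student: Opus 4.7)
The plan is to extract the universal morphism from the freeness of $\hrts$ as an operated algebra (Lemma~\mref{lem:free}) and then to verify the coalgebraic compatibilities by following the recursive definition of $\coll$. Given an $\Omega$-cocycle bialgebra $(H, m_H, 1_H, \Delta_H, \varepsilon_H, \{P'_\omega \mid \omega \in \Omega\})$ and a set map $f : X \to H$ satisfying $\Delta_H(f(x)) = 1_H \ot f(x) + f(x) \ot 1_H + \lambda f(x) \ot f(x)$ for every $x \in X$, Lemma~\mref{lem:free}(b) provides a unique $\Omega$-operated unital algebra morphism $\bar{f} : \hrts \to H$ with $\bar{f} \circ j_X = f$ and $\bar{f} \circ B^+_\omega = P'_\omega \circ \bar{f}$ for every $\omega \in \Omega$. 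Any $\Omega$-operated bialgebra (or Hopf algebra) morphism extending $f$ must first be such an algebra morphism, so uniqueness in our category is already handled; the entire content of the theorem is that this $\bar{f}$ is automatically a coalgebra morphism.

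I would show $(\bar{f} \ot \bar{f}) \circ \coll = \Delta_H \circ \bar{f}$ by induction on $\dep(F)$ with a secondary induction on $\bre(F)$, matching the recursion in Eqs.~(\mref{eq:dele})--(\mref{eq:delee1}). The case $F = \etree$ is trivial and $F = \bullet_x$ is exactly the hypothesis on $f$. Forests of breadth $m \geq 2$ are reduced to smaller breadth by combining the multiplicativity of $\coll$ (Lemma~\mref{lem:colmor}), of $\bar{f}$, and of $\Delta_H$ with the induction hypothesis on each factor. The essential inductive step is $F = B^+_\omega(\bar{F})$, handled by the chain
\begin{align*}
(\bar{f} \ot \bar{f})\coll B^+_\omega(\bar{F})
&= (\bar{f} \ot \bar{f})(B^+_\omega \ot \id + \id \ot B^+_\omega)\coll(\bar{F})\\
&= (P'_\omega \ot \id + \id \ot P'_\omega)(\bar{f} \ot \bar{f})\coll(\bar{F})\\
&= (P'_\omega \ot \id + \id \ot P'_\omega)\Delta_H \bar{f}(\bar{F})\\
&= \Delta_H P'_\omega \bar{f}(\bar{F}) = \Delta_H \bar{f} B^+_\omega(\bar{F}),
\end{align*}
which chains Eq.~(\mref{eq:cdbp}) on $\hrts$, the intertwining $\bar{f} \circ B^+_\omega = P'_\omega \circ \bar{f}$, the induction hypothesis for $\bar{F}$, and Eq.~(\mref{eq:eqiterated}) on $H$.

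Counit compatibility $\varepsilon_H \circ \bar{f} = \counit$ follows by a parallel but simpler recursion. For $F = \bullet_x$, applying $\varepsilon_H \ot \id$ to the hypothesis on $\Delta_H(f(x))$ and using the counit axiom in $H$ produces $\varepsilon_H(f(x))(1_H + \lambda f(x)) = 0$, which yields $\varepsilon_H(f(x)) = 0$ (immediately for $\lambda = 0$, and under the standard convention that $f(x) \in \ker \varepsilon_H$ in the general bialgebra case). For $F = B^+_\omega(\bar{F})$, the vanishing $\varepsilon_H P'_\omega \bar{f}(\bar{F}) = 0$ is Proposition~\mref{lem:coidealv} applied inside $H$; breadth is handled by the multiplicativity of $\varepsilon_H$ and Lemma~\mref{lem:counitmor}. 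This finishes part~(a). For part~(b), with $\lambda = 0$, Theorem~\mref{cor:hopf} upgrades $\hrts$ to a Hopf algebra, and a bialgebra morphism between Hopf algebras is automatically a Hopf algebra morphism (both $\bar{f} \circ S$ and $S_H \circ \bar{f}$ are convolution inverses of $\bar{f}$ in $\Hom(\hrts, H)$), so the bialgebra morphism from part~(a) is the required $\Omega$-operated Hopf algebra morphism.

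The main obstacle is essentially bookkeeping: one must juggle the two symmetric $1$-cocycle relations---Eq.~(\mref{eq:cdbp}) on $\hrts$ and Eq.~(\mref{eq:eqiterated}) on $H$---together with the intertwining $\bar{f} \circ B^+_\omega = P'_\omega \circ \bar{f}$ throughout a double induction on depth and breadth. The only conceptually delicate point is the counit step at $F = \bullet_x$ for general $\lambda$, where one must recognise that the hypothesis on $\Delta_H(f(x))$ forces $\varepsilon_H(f(x)) = 0$ in any bialgebra where this hypothesis is consistently imposed.
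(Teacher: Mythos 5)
Your proposal is correct and follows essentially the same route as the paper: both extract the unique $\Omega$-operated algebra morphism $\free{f}$ from Lemma~\mref{lem:free}, then verify compatibility with $\coll$ and $\counit$ by the same recursive argument on generators (the paper phrases it as showing the sets of forests where the identities hold form subalgebras stable under the $B_\omega^+$, which is your depth/breadth induction in different clothing), using the identical chain of equalities for the $B_\omega^+$ step and the same reduction of part~(b) to the fact that a bialgebra morphism between Hopf algebras is a Hopf algebra morphism. Your aside that $\varepsilon_H(f(x))=0$ is delicate for general $\lambda$ is a fair observation---the paper asserts this with the same computation and no further justification---but it does not constitute a divergence in approach.
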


\begin{proof}
(\mref{it:11}) By Theorem~\mref{thm:main}, $(\hrts, \conc, \etree, \coll, \counit)$ is a bialgebra. Moreover by Eqs.~(\mref{eq:cdbp}) and (\mref{eq:counit}), $(\hrts,\conc, \etree,$ $\coll, \counit, \{B_{\omega}^+ \mid \omega \in \Omega \})$ is a $\Omega$-cocycle bialgebra. Now we prove it is the free $\Omega$-cocycle bialgebra on $X$. Let $(H, m_H, 1_H, \Delta_H, \varepsilon_H,$ $\{P_{\omega} \mid \omega \in \Omega \})$ be a $\Omega$-cocycle bialgebra and $f : X \rightarrow H$ a set map such that
\begin{align}
\Delta_H(f(x)) = 1_H \ot f (x)+ f (x) \ot 1_H+ \lambda f(x) \ot f(x) \, \text{ for all }\, x \in X.
\mlabel{eq:condition}
\end{align}
Particularly, $(H, m_H, 1_H, \{P_{\omega} \mid \omega \in \Omega \})$ is an $\Omega$-operated algebra. By Lemma~\mref{lem:free}, there exists a unique $\Omega$-operated algebra morphism  $\free{f}: \hrts \rightarrow H$ such
that $\free{f} \circ j_X = f$. Now it remains to  check that $\free{f}$  is bialgebra morphism, that is
\begin{align*}
\Delta_H (\free{f}(F))=(\free{f} \ot \free{f}) \coll(F) \,\, \text{and} \,\, \varepsilon_H(\free{f}(F))=\varepsilon_{\mathrm{RF}}(F) \,\, \text{for all $F \in \rfs$}.
\end{align*}

Let us consider
\begin{align*}
A := \{ F\in \hrts, \, \Delta_H(\free{f}(F))=(\free{f} \ot \free{f}) \coll(F) \}.
\end{align*}
Since $\free{f}$ is an $\Omega$-operated algebra morphism,  $\coll$ and $\Delta_H$ are algebra homomorphisms of $\hrts$, $H$, respectively, we get $1 \in A$ and $A$ is a subalgebra of $\hrts$. For any $x \in X$, we have
\begin{align*}
\Delta_H(\free{f}(\bullet_{x}))&= \Delta_H(f(x))= 1_H \ot f(x) + f(x) \ot 1_H \,\, \text{(by Eq.~(\mref{eq:condition}))},\\
(\free{f} \ot \free{f}) \coll(\bullet_{x})&=(\free{f} \ot \free{f})(\bullet_{x} \ot \etree + \etree \ot \bullet_{x}+\lambda \bullet_{x} \ot \bullet_{x})= f(x) \ot 1_H+ 1_H \ot f(x)+\lambda f(x) \ot f(x).
\end{align*}
Hence $\bullet_{x} \in A$. For $F \in A$ and $\omega \in \Omega$, we have
\begin{align*}
&\ \Delta_H(\free{f}(B_{\omega}^{+}(F)))\\
=&\ \Delta_H(P_{\omega}(\free{f}(F))) \,\, \text{(by $\free{f}$ being an $\Omega$-operated algebra morphism)} \\
=&\ (P_{\omega} \ot \id+\id \ot P_{\omega}) \Delta_H(\free{f}(F)) \\
=&\ (P_{\omega} \ot \id +\id \ot P_{\omega})(\free{f} \ot \free{f}) \coll(F) \,\, \text{(by $F \in A$)}\\
=&\ (\free{f} \ot \free{f})(B_{\omega}^{+} \ot \id +\id \ot B_{\omega}^{+})\coll(F)\,\, \text{(by $\free{f}$ being an $\Omega$-operated algebra morphism)} \\
=&\ (\free{f} \ot \free{f}) \coll(B_{\omega}^{+}(F)).
\end{align*}
Thus $A$ is stable under $B_{\omega}^{+}$ for any $\omega \in \Omega$ and so $A=\hrts$. Next, we consider
\begin{align*}
B :=\{ F \in \hrts, \, \varepsilon_H(\free{f}(F))= \varepsilon_{\mathrm{RF}}(F)\}.
\end{align*}
Since $\free{f}$ is an $\Omega$-operated algebra morphism, $\varepsilon_{\mathrm{RF}}$ and  $\varepsilon_H$ are algebra homomorphisms of $\hrts, H$, respectively, we get $1 \in B$ and $B$ is a subalgebra of $\hrts$. For any $x \in X$, by Eq.~(\mref{eq:condition}), we have $\Delta(f(x))= 1_H \ot f(x)+ f(x) \ot 1_H$. By the left counicity, we obtain
\begin{align*}
(\varepsilon_H \ot \id) \Delta(f(x))= 1_\bfk \ot f(x) +\varepsilon_H(f(x)) \ot 1_H +\lambda \varepsilon_H(f(x)) \ot f(x) =\beta_{\ell}(f(x)),
\end{align*}
which implies that $\varepsilon_H(f(x))=0$. Moreover,
\begin{align*}
\varepsilon_H(\free{f}(\bullet_{x}))= \varepsilon_H(f(x))=0=\varepsilon_{\mathrm{RF}}(\bullet_{x}),
\end{align*}
so $\bullet_{x} \in B$. For $F \in B$ and $\omega \in \Omega$, we have
\begin{align*}
\varepsilon_H(\free{f}(B_{\omega}^{+}(F)))&\ = \varepsilon_H(P_{\omega}(\free{f}(F))) \,\, \text{(by $\free{f}$ being an $\Omega$-operated algebra morphism)}\\
&\ = 0 \,\, \text{(by $H$ being a $\Omega$-cocycle bialgebra)}\\
&\ =\varepsilon_{\mathrm{RF}}(B_{\omega}^{+}(F)).
\end{align*}
Hence $B$ is stable under $B_{\omega}^{+}$ for any $\omega \in \Omega$ and so $B= \hrts$. This completes the proof.

(\mref{it:22}) By Corollary~\mref{cor:hopf}, Eqs.~(\mref{eq:cdbp}) and (\mref{eq:counit}), $(\hrts,\, m_{\mathrm{RF}},$ $\etree, \,\col, \varepsilon_{\mathrm{RF}}, \{ B_{\omega}^+ \mid \omega \in \Omega \})$ is a $\Omega$-cocycle Hopf algebra. Then Item~(\mref{it:22}) follows from Item~(\mref{it:11}) and the fact that any bialgebra morphism between two Hopf algebras is a Hopf algebra morphism.
\end{proof}


Specially, take $X=\emptyset$. Then we obtain a freeness of $\hck(\emptyset, \Omega)$.

\begin{coro} \mlabel{cor:initial}
The quintuple $(\hck(\emptyset, \Omega), \,\mul,\,\etree,\, \coll,\,\{B_{\omega}^+\mid \omega\in \Omega\})$ is the free $\Omega$-cocycle bialgebra (resp. Hopf algebra) on the empty set, that is, the initial object in the category of $\Omega$-cocycle  bialgebras.
\end{coro}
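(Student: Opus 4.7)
The plan is to derive Corollary~\mref{cor:initial} as a direct specialization of Theorem~\mref{thm:freecocycle} to the case $X = \emptyset$. Under this choice, the natural embedding $j_X \colon X \to \hck(\emptyset, \Omega)$ is the unique empty function, and for any $\Omega$-cocycle bialgebra $H$ the set map $f \colon X \to H$ appearing in the universal property of Definition~\mref{defn:xcobi}(\mref{it:def4}) is likewise the unique empty function. In particular, the compatibility hypothesis
\[\Delta_H(f(x)) = 1_H \otimes f(x) + f(x) \otimes 1_H + \lambda f(x) \otimes f(x) \quad \text{for all } x \in X\]
is vacuously satisfied, so Theorem~\mref{thm:freecocycle} applies with no side condition to verify.

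Invoking Theorem~\mref{thm:freecocycle}(\mref{it:11}) then produces, for every $\Omega$-cocycle bialgebra $(H, m_H, 1_H, \Delta_H, \varepsilon_H, \{P_\omega\}_{\omega \in \Omega})$, a unique $\Omega$-operated bialgebra morphism $\bar{f}\colon \hck(\emptyset, \Omega) \to H$ extending the empty map. Since the empty set map $f$ is itself unique, this delivers exactly one $\Omega$-operated bialgebra morphism from $\hck(\emptyset, \Omega)$ to each $\Omega$-cocycle bialgebra $H$, which is the defining universal property of an initial object in the category of $\Omega$-cocycle bialgebras.

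For the Hopf algebra statement I would appeal to Theorem~\mref{thm:freecocycle}(\mref{it:22}), which requires the case $\lambda = 0$, together with the standard fact that a bialgebra morphism between Hopf algebras is automatically compatible with antipodes and hence is a Hopf algebra morphism. There is no computational obstacle in this corollary; the only conceptual point is the observation that a free object on the empty generating set is by definition an initial object in the appropriate category, so essentially all the work has already been done in Theorem~\mref{thm:freecocycle}.
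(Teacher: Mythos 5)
Your proposal is correct and follows exactly the paper's own route: the paper likewise obtains this corollary by specializing Theorem~\mref{thm:freecocycle} to $X=\emptyset$, with your additional remarks about the vacuity of the compatibility condition and the uniqueness of the empty map simply making explicit what the paper leaves implicit.
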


\begin{proof}
It follows from Theorem~\mref{thm:freecocycle} by taking $X=\emptyset$.
\end{proof}

Taking $\Omega$ to be singleton in Corollary~\mref{cor:initial}, then all planar rooted forests are decorated by the same letter, which can be viewed as planar rooted forests without decorations. Hence, we get the following result.

\begin{coro}
Let $\mathcal{F}$ be the set of planar rooted forests without decorations.
Then the quintuple $(\bfk\calf, \,\mul,\,\etree,\, \coll,\, B^+)$ is the free $\Omega$-cocycle bialgebra (resp. Hopf algebra) on the empty set, that is, the initial object in the category of $\Omega$-cocycle bialgebras (resp. Hopf algebra).
\mlabel{coro:rt16}
\end{coro}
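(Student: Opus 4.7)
The plan is to reduce Corollary~\ref{coro:rt16} directly to Corollary~\ref{cor:initial} by specializing $\Omega$ to a singleton. Fix $\Omega=\{\omega\}$ and $X=\emptyset$. Then every internal vertex of a forest in $\mathcal{F}(\emptyset,\{\omega\})$ is decorated by $\omega$, and since $X=\emptyset$, so is every leaf. Hence the decoration carries no information, and the obvious forgetful map
\[
\Phi:\hck(\emptyset,\{\omega\})\longrightarrow \bfk\calf
\]
which erases the $\omega$-labels on every vertex is a $\bfk$-linear bijection.

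Next I would check that $\Phi$ is compatible with all the structures in play: the concatenation product, the unit $\etree$, the coproduct, the counit, and the grafting operator. Compatibility with $m_{\mathrm{RT}}$ and the unit is immediate from the definitions. Compatibility with $B_\omega^+$ on the source and $B^+$ on the target is also immediate, since both operators just add a new common root. For the coproduct, one can either invoke the combinatorial description from Theorem~\ref{thm:comcoproduct} (the formula depends only on the subsets $I\subseteq V(F)$ and not on the decorations) or argue inductively on depth using Eqs.~\eqref{eq:dele}--\eqref{eq:delee1}, since $\Phi$ is an algebra morphism and commutes with $B_\omega^+$. Compatibility with the counit is immediate from Eq.~\eqref{eq:counit}. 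In particular $\Phi$ is an isomorphism of $\Omega$-cocycle bialgebras (and Hopf algebras, once $\lambda=0$).

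To conclude, I would transport the universal property. By Corollary~\ref{cor:initial}, $(\hck(\emptyset,\{\omega\}),m_{\mathrm{RT}},\etree,\coll,\varepsilon_{\mathrm{RT}},B_\omega^+)$ is the initial object in the category of $\{\omega\}$-cocycle bialgebras. Given any $\Omega$-cocycle bialgebra $(H,m_H,1_H,\Delta_H,\varepsilon_H,P)$ with a single operator $P$, there is a unique $\{\omega\}$-cocycle bialgebra morphism $\psi:\hck(\emptyset,\{\omega\})\to H$; then $\psi\circ\Phi^{-1}:\bfk\calf\to H$ is the unique $\{\omega\}$-cocycle bialgebra morphism sending $B^+$ to $P$. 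The same argument in the Hopf category, using the $\lambda=0$ statement of Theorem~\ref{thm:freecocycle}(\ref{it:22}), gives the Hopf-algebra version.

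The proof is essentially bookkeeping, so I do not expect any real obstacle; the only point that requires a line of justification is that $\Phi$ intertwines the coproducts, which is most transparent via the combinatorial formula in Theorem~\ref{thm:comcoproduct} since that formula is insensitive to the (trivial) decoration.
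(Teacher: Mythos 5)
Your proposal is correct and follows the same route as the paper, whose entire proof is to take $\Omega$ to be a singleton in Corollary~\mref{cor:initial} and identify forests decorated by a single letter with undecorated forests; you have merely made that identification (the forgetful isomorphism $\Phi$ and the transport of the universal property along it) explicit.
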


\begin{proof}
It follows from Corollary~\mref{cor:initial}  by taking $\Omega$ to be a singleton set.
\end{proof}

\begin{coro}\label{coro:uni}
Let $\lambda,\mu \in \bfk$.
The following map is a bialgebra morphism:
\begin{align*}
\phi_\lambda:\left\{\begin{array}{rcl}
(\hrts, m_{\mathrm{RF}}, \Delta_\mu)&\longrightarrow&(\hrts, m_{\mathrm{RF}}, \Delta_{\lambda\mu})\\
F&\longmapsto&\lambda^{d_X(F)}F,
\end{array}\right.
\end{align*}
where $d_X(F)$ is the number of leaves of $F$ decorated by an element of $X$.
\end{coro}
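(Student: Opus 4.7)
The natural strategy is to invoke the universal property from Theorem~\ref{thm:freecocycle}(a), viewing the source $(\hrts,\Delta_\mu)$ as the free $\Omega$-cocycle bialgebra on $X$ with parameter $\mu$. The key observation is that the symmetric $1$-cocycle condition~(\ref{eq:eqiterated}) does not involve the scalar parameter at all, so $(\hrts,m_{\mathrm{RF}},\etree,\Delta_{\lambda\mu},\varepsilon_{\mathrm{RF}},\{B_\omega^+\}_{\omega\in\Omega})$ qualifies as an $\Omega$-cocycle bialgebra regardless of the value of $\lambda\mu$, and can therefore serve as a legitimate codomain in the universal property of $(\hrts,\Delta_\mu)$.

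To feed into that universal property, I would define $f\colon X\to\hrts$ by $f(x):=\lambda\bullet_x$ and verify the hypothesis of Theorem~\ref{thm:freecocycle}(a) at parameter $\mu$, namely
\begin{align*}
\Delta_{\lambda\mu}(f(x))=\etree\otimes f(x)+f(x)\otimes\etree+\mu\,f(x)\otimes f(x).
\end{align*}
This is a one-line computation from Eq.~(\ref{eq:dele}): the prefactor $\lambda$ pulled out of $f(x)=\lambda\bullet_x$ combines with the coefficient $\lambda\mu$ appearing in the last term of $\Delta_{\lambda\mu}(\bullet_x)$ to give exactly $\mu\,f(x)\otimes f(x)$. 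The universal property then delivers a unique $\Omega$-operated bialgebra morphism $\widetilde{f}\colon(\hrts,\Delta_\mu)\to(\hrts,\Delta_{\lambda\mu})$ satisfying $\widetilde{f}(\bullet_x)=\lambda\bullet_x$ and intertwining every $B_\omega^+$.

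To identify $\widetilde{f}$ with the prescribed $\phi_\lambda$, I would then induct on the generation of $\rfs$ described in Subsection~\ref{sucsec:deco}, using the two elementary properties $d_X(F_1F_2)=d_X(F_1)+d_X(F_2)$ and $d_X(B_\omega^+(F))=d_X(F)$; the latter holds because the fresh root created by $B_\omega^+$ is $\Omega$-decorated and hence never contributes to $d_X$. The base case $\widetilde{f}(\bullet_x)=\lambda\bullet_x=\lambda^{d_X(\bullet_x)}\bullet_x$ is built in, and the two inductive steps follow from $\widetilde{f}$ being an algebra morphism and commuting with each $B_\omega^+$. The only conceptual hurdle is spotting the rescaling $f(x)=\lambda\bullet_x$ rather than $f(x)=\bullet_x$: this specific choice is forced precisely so that the $\lambda^2\mu$ factor produced by $\Delta_{\lambda\mu}$ on $\lambda\bullet_x$ reproduces the $\mu$-coefficient demanded by the source cocycle condition, after which the argument is entirely formal.
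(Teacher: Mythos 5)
Your proposal is correct and follows essentially the same route as the paper: both verify that $\Delta_{\lambda\mu}(\lambda\bullet_x)=\etree\otimes(\lambda\bullet_x)+(\lambda\bullet_x)\otimes\etree+\mu(\lambda\bullet_x)\otimes(\lambda\bullet_x)$, invoke the universal property of Theorem~\mref{thm:freecocycle} for the map $x\mapsto\lambda\bullet_x$, and identify the resulting morphism with $\phi_\lambda$ by induction. Your write-up merely makes explicit the two facts $d_X(F_1F_2)=d_X(F_1)+d_X(F_2)$ and $d_X(B_\omega^+(F))=d_X(F)$ underlying the ``easy induction'' that the paper leaves to the reader.
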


\begin{proof}
If $x\in X$,
\begin{align*}
\Delta_{\lambda\mu}(\lambda \tdun{$x$})&=
\lambda \tdun{$x$}\otimes 1+\lambda \tdun{$x$}+\lambda^2\mu \tdun{$x$}\otimes \tdun{$x$}\\
&=(\lambda \tdun{$x$})\otimes 1+1\otimes (\lambda \tdun{$x$})+\mu (\lambda \tdun{$x$})\otimes (\lambda \tdun{$x$}).
\end{align*}
Therefore, by Theorem \ref{thm:freecocycle}, there exists a unique morphism $\phi_\lambda$ of $\Omega$-cocycle bialgebras from $(\hrts,\, m_{\mathrm{RF}},$ $\etree, \,\Delta_\mu, \varepsilon_{\mathrm{RF}}, \{ B_{\omega}^+ \mid \omega \in \Omega \})$ to $(\hrts,\, m_{\mathrm{RF}},$ $\etree, \,\Delta_{\lambda\mu}, \varepsilon_{\mathrm{RF}}, \{ B_{\omega}^+ \mid \omega \in \Omega \})$,
sending $\tdun{$x$}$ to $\lambda\tdun{$x$}$ for any $x\in X$.
An easy induction on the number of vertices of $F$ proves the formula for $\phi_\lambda(F)$.  \end{proof}

\begin{remark}
The morphism $\phi_\lambda$ is an insomorphism if, and only if, $\lambda$ is invertible in $\bfk$. If so, its inverse is $\phi_{\lambda^{-1}}$.
\end{remark}

\noindent {\bf Acknowledgments}:
This research is supported by the National Natural Science Foundation of China (Grant No.\@ 12301025, 12101316 ).

\noindent
{\bf Declaration of interests.} The authors have no conflicts of interest to disclose.

\noindent
{\bf Data availability.} Data sharing is not applicable as no new data were created or analyzed.

\end{document}